\UseRawInputEncoding
\documentclass[a4paper, twoside, 10pt]{article}
\usepackage{amsmath,amsthm,amsfonts,latexsym,amscd,amssymb, enumerate,mathtools}
\usepackage{hyperref, xcolor, soul, xparse,hyperref}
\numberwithin{equation}{section}
\input xypic
\setlength{\parskip}{1ex plus 0.5ex minus 0.2ex}
\addtolength{\textwidth}{0.6cm}
\addtolength{\textheight}{0.5cm}
\addtolength{\hoffset}{-0.5cm}
\setlength{\oddsidemargin}{1cm} 
\setlength{\evensidemargin}{1cm} 
\pagestyle{myheadings}
\thispagestyle{empty}
\markboth{\small{M. Aqib, H. M. Shah  and D. S. Patra }}{\small{Almost RB-solitons isometric to spheres}}
\newtheorem{theorem}{Theorem}[section]
\newtheorem{lemma}{Lemma}[section]
\newtheorem{corollary}{Corollary}[section]

\theoremstyle{definition}
\newtheorem{definition}{Definition}[section]
\newtheorem{example}{Example}[section]
\newtheorem{remark}{Remark}[section]
\title{\textbf{Characterizations of Almost Ricci Bourguignon Solitons}}
\author{Mohammad Aqib, Hemangi Madhusudan Shah
and Dhriti Sundar Patra}
\date{}
\begin{document}
	\maketitle	
	\begin{abstract}
In this paper, we revisit the study of almost Ricci-Bourguignon solitons by clarifying their position in the broader context of Einstein-type metrics. Motivated by known rigidity results for compact almost Ricci solitons, we aim to identify conditions under which a compact almost RB-soliton is trivial or exhibits special geometric properties. We compare our results with classical theorems of Barros and Ribeiro, and explain explicitly how our work extends or complements these earlier findings.
\end{abstract}



	
	\textbf {M. S. C. 2020:} 53B20, 53B25, 53C18, 53C35.\\
	
	\noindent
	\textbf{Keywords:  Ricci-Bourguignon soliton; Poisson equation;  Obata's differential equation; Cigar soliton.}

	\tableofcontents

\section{Introduction}\label{sec1}


Geometric flows have become central tools in differential geometry, particularly after Hamilton introduced the Ricci flow \cite{hamilton1982}. The Ricci flow deforms a Riemannian metric $g(t)$ via the evolution equation:
\[
\frac{\partial g}{\partial t} = -2\operatorname{Ric}(g),
\]
Smoothing out the curvature. It has been used to study geometric and topological properties of manifolds, including the uniformization of surfaces and classification of 3-manifolds.

A major milestone was achieved when Perelman used the Ricci flow, along with entropy functionals and the concept of shrinking Ricci solitons, to prove the Poincar\'e and Thurston's Geometrization Conjectures \cite{perelman2002entropy, perelman2003ricci}.

Ricci solitons, which satisfy
\[
\operatorname{Ric} + \nabla^2 f = \lambda g,
\]
are self-similar solutions to the Ricci flow and model its singularities. This motivated a wide study of soliton-like structures in other geometric settings. 

Among these, the Ricci--Bourguignon flow \cite{CCD17} modifies the Ricci flow by incorporating the scalar curvature:
\[
\frac{\partial g}{\partial t} = -2(\operatorname{Ric} - \rho R g),
\]
where $\rho \in \mathbb{R}$.
The case $\rho=0$ recovers the classical Ricci flow, while other values of $\rho$ yield geometrically significant flows, each with their own physical and geometric interpretations.
For example, $\rho = \frac{1}{2}$ and $\frac{1}{2(n-1)},$ respectively, correspond to Einstein flow and Schouten flow.

The solitons corresponding to Ricci--Bourguignon flow are defined by :
\[
\operatorname{Ric} + \frac{1}{2}\mathcal{L}_\xi g = (\lambda + \rho R)g,
\]
with a potential vector field $\xi$ and a constant $\lambda$.

Inspired by the concept of \emph{almost Ricci solitons} introduced by Pigola et al. \cite{pigola2011ricci}, S.~Dwivedi proposed the notion of an \emph{almost Ricci--Bourguignon soliton} \cite{SDRBS}, where the soliton function $\lambda$ is allowed to be variable over $M$. These solitons generalize the Ricci--Bourguignon solitons and arise in both pure and applied geometric contexts, including fluid space-times \cite{chaudhary2025gradient} and warped product manifolds \cite{feitosa2019gradient}.
This generalisation has proven fruitful in understanding Einstein-type metrics and their rigidity properties \cite{BBR}.
Building on this framework, almost Ricci-Bourguignon solitons were introduced  in \cite{SDRBS} as follows.

\begin{definition}\label{def:almostRB}
An {\it almost Ricci-Bourguignon soliton} is a tuple $(M^n,g,\xi,\lambda,\rho)$ satisfying:
\begin{equation}\label{rbsoliton}
    \operatorname{Ric} + \frac{1}{2}\pounds_\xi g = (\lambda + \rho S)g,
\end{equation}
where $\rho \in \mathbb{R}$ is constant.
\end{definition}

Although \cite{CCD17} established short-time existence for RB-flows, the almost soliton case remains open. Our results identify:
\begin{itemize}
    \item Obstructions to almost soliton formation (Theorems \ref{thm4.3}-\ref{thm4.4}).
    \item Critical $\rho$ thresholds (Remark \ref{rho-remark}).
    \item Potential blow-up configurations (Example \ref{ex:cigar}).
    \end{itemize}
\smallskip
The study of almost solitons has developed along two main directions:
\begin{itemize}
    \item Structural results for compact cases \cite{barros2012some,BBR}.
    \item Classification under curvature conditions\cite{barros2013note,SDARSIS}.
\end{itemize}
\smallskip
Our work bridges these directions in the RB-settings by:
\begin{itemize}
    \item Extending \cite[Theorem 2]{barros2012some} to include scalar curvature terms (Theorem \ref{riclb}).
    \item Generalizing \cite[Proposition 1]{barros2013note} through new integral identities (Lemmas \ref{lem1}-\ref{lem5}).
\end{itemize}

\begin{remark}\label{rho-remark}
The parameter $\rho$ has analytic and geometric key regimes as follows :
\begin{itemize}
    \item[\bf (A)] \textit{Analytic}:
    \begin{itemize}
        \item Flow existence: $\rho < \frac{1}{2(n-1)}$ \cite{CCD17}.
        \item Lemma \ref{lem3}: $\rho \neq \frac{1}{2(n-1)}$.
        \item Theorem \ref{pos}: $\rho < \frac{3n-4}{2n(n-1)}$.
    \end{itemize}
    \item[\bf (B)] \textit{Geometric}:
    \begin{itemize}
        \item $\rho=0$: Ricci solitons \cite{pigola2011ricci}.
        \item $\rho=\frac{1}{2n}$: Einstein-Hilbert critical point  (This parameter corresponds to a critical point in the Ricci-Bourguignon flow, as it balances the trace contributions in the variational derivative of the Einstein-Hilbert action. This choice ensures the flow preserves the Einstein condition, analogous to how the Einstein-Hilbert functional yields Einstein metrics as its critical points).
        \end{itemize}
\end{itemize}
\end{remark}

Recently, the geometers have extended the theory of Ricci-type solitons in various directions:
\begin{itemize}
    \item Manev \cite{manevalmost2025} studied Ricci--Bourguignon almost solitons on almost contact complex Riemannian manifolds with vertical torse-forming potentials.
    \item Dey and Akram \cite{dey2024} investigated gradient $\varrho$-Ricci--Bourguignon almost solitons on paracontact manifolds.
    \item Kaya and \c{O}zg\"{u}r \cite{kaya2024} constructed examples on sequential warped product manifolds.
    \item Blaga and \c{O}zg\"{u}r \cite{blaga2022eta, blaga2022sub, blaga2024riemann} analyzed almost $\eta$-Ricci--Bourguignon solitons on submanifolds and in contact geometry.
\end{itemize}

While the mathematical significance of Ricci and $\rho$-Einstein solitons is well-established as self-similar solutions of geometric flows, the importance of their almost counterparts deserves careful justification. The key motivation for studying almost Ricci-Bourguignon solitons lies in their role as natural generalisations that:
\begin{itemize}
    \item Provide a broader framework for understanding rigidity phenomena in Riemannian geometry.
    \item Allow for more flexible geometric structures while retaining key analytic properties.
    \item Offer new insights into the interplay between curvature, flow equations, and conformal geometry.
\end{itemize}

Recent work by Chaudhary and Pal \cite{chaudhary2025gradient} further illustrates the interest in gradient Ricci--Bourguignon solitons (GRB solitons), showing they unify Ricci and Yamabe solitons by varying the parameter $\rho$. 
They also obtained scalar curvature expressions, volume formulas, and curvature estimates on compact and non-compact manifolds, and highlighted applications to perfect fluid space-times in general relativity.
This reinforces that, even when these solitons do not directly correspond to self-similar solutions of known flows, they still encode rich geometric rigidity and structure.

Our work makes several distinct contributions to this developing theory:

1. We establish new integral identities for compact almost RB-solitons (Lemmas \ref{lem1}-\ref{lem5}) that generalise known results for almost Ricci solitons \cite{BBR} while requiring different techniques due to the additional scalar curvature term.

2. We provide novel characterisations of spheres through compact gradient almost RB-solitons (Theorems \ref{riclb} and \ref{pos}), extending previous work by Deshmukh \cite{SDARSIS} 
while weakening some geometric assumptions.

3. We identify conditions under which almost RB-solitons must be trivial (Theorems \ref{3.3}, \ref{thm4.3}, \ref{thm4.4}), complementing results in \cite{SDANARS} and \cite{ghosh2022certain}.

4. Our approach using Poisson equations and Obata-type arguments offers alternative methods to those employed in \cite{SDRBS} and \cite{BBR}, leading to different geometric insights.

Our motivation stems partly from generalizing known results about almost Ricci solitons to almost RB solitons towards generalizing partly from earlier works such as Barros and Ribeiro \cite{barros2012some}, Barros, Gomes and Ribeiro \cite{barros2013note}, and others, which show how certain vector fields (e.g., conformal vector fields) force the underlying manifold to have special geometry.

The paper is organised as follows: Section \ref{eg} provides key examples establishing the non-triviality of almost RB-solitons. Section \ref{crb} develops fundamental identities and curvature properties. Sections \ref{thm} and \ref{trival} contain our main results on sphere characterisations and triviality conditions, respectively.

\subsection{Examples of almost RB-soliton}\label{eg}
In this subsection, we present two examples of almost RB-solitons. The first example is of
The cigar is almost an RB-soliton, and the other one is almost
 RB-soliton on a warped product manifold.

Hamilton's {\it cigar} Ricci soliton (Witten's black hole) is an example of prime 
importance in the study of Ricci flow, 
see, for example,  \cite[\bf p.~10]{PT}.
 It describes a steady gradient Ricci soliton as follows:
\begin{eqnarray}\label{css}
 \left({\mathbb{R}^2}, g_0 = \frac{dx^2+dy^2}{1+x^2+y^2}, V = - 2\left(x\frac{\partial}{\partial x}+y\frac{\partial}{\partial y} \right), \lambda=0 \right).
\end{eqnarray}
 It can be easily seen that the Ricci curvature of the cigar metric can be expressed as,
 \begin{eqnarray}\label{ricci}
\operatorname{Ric}_{g_0}=\frac{2}{\left(1+x^2+y^2\right)}g_0.
\end{eqnarray}
Furthermore, the Lie derivative in the direction of 
vector field $V$ turns to be
\begin{eqnarray}\label{cr}
\pounds_V g_0=-\frac{4}{\left(1+x^2+y^2\right)}g_0.
\end{eqnarray}
Note that the potential vector field 
$V = \nabla f, \;\text{where}\; f = - \log\left(1+x^2+y^2\right).$  
Thus, we conclude from (\ref{ricci} and (\ref{cr}) that the cigar metric indeed describes a steady gradient Ricci soliton
 described by (\ref{cr}).

Our examples address three key questions raised in \cite{feitosa2019gradient}:
\begin{enumerate}
    \item Existence of non-gradient solutions (Example 2).
    \item Non-trivial dependence on $\rho$ (Example 1).
    \item Relation to known solitons when $\rho\to 0$ (both examples).
\end{enumerate}

\smallskip

\begin{example}[{\bf Cigar almost RB-soliton}]\label{ex:cigar}
The cigar metric demonstrates $\rho$-dependent behavior:
\begin{itemize}
    \item For $\rho=0$: Recovers Hamilton's cigar \cite{PT}.
    \item For $\rho\neq0$: Exhibits phase transitions at $\rho = \frac{1}{2(n-1)}$ (cf. Remark \ref{rho-remark}).
\end{itemize}
Now we present an example of {\it Cigar almost RB-soliton}, which generalises the aforementioned cigar Ricci soliton. Towards this first, consider the  almost RB flow  on $\mathbb{R}^2$ obtained by the evolution of 
The time-dependent metric is given by 
     \begin{equation}\label{cm}
         g((x,y),t):=\frac{dx^2+dy^2}{e^{4(1-\rho)t}+x^2+y^2}.
     \end{equation}
We will compute the Christoffel symbols corresponding to metrics $g(t)$ to determine their Ricci tensors. The Christoffel symbols $\Gamma^\alpha{ }_{\beta \gamma}$ are defined by
     \begin{equation}\label{cs}
         \Gamma^\alpha{ }_{\beta \gamma}=\frac{1}{2} g^{\alpha \rho}\left(\partial_\gamma g_{\rho \beta}+\partial_\beta g_{\rho \gamma}-\partial_\rho g_{\beta \gamma}\right).
     \end{equation}
Employing  \eqref{cm} and \eqref{cs}, we find   
   $$  \begin{gathered}
\Gamma_{11}^1=\Gamma_{1 2}^2=\Gamma_{2 1}^2=-\frac{x}{e^{4(1-\rho) t}+x^2+y^2}, \\
\Gamma_{2 2}^2=\Gamma_{1 2}^1=\Gamma_{2 1}^1=-\frac{y}{e^{4(1-\rho) t}+x^2+y^2}, \\
\Gamma^{1 }_{2 2}=\frac{x}{e^{4 (1-\rho)t}+x^2+y^2},
\end{gathered}
$$
and $$\Gamma^{2 }_{1 1}=\frac{y}{e^{4 (1-\rho)t}+x^2+y^2}.$$
\ can give the components of the Ricci tensor in terms of the Christoffel symbols cite[{\bf p.~197}]{Lee}
\[
\operatorname{Ric}_{\alpha \beta}=\partial_\rho \Gamma^\rho{ }_{\beta \alpha}-\partial_\beta \Gamma^\rho{ }_{\rho \alpha}+\Gamma^\rho{ }_{\rho \lambda} \Gamma^\lambda{ }_{\beta \alpha}-\Gamma^\rho{ }_{\beta \lambda} \Gamma^\lambda{ }_{\rho \alpha}.
\]
Utilizing the aforementioned data along with the Christoffel symbols corresponding to 
$g(t)$, we obtain the Ricci tensor for $g(t)$ as:
\begin{equation}
\operatorname{Ric}(t)=\frac{2 e^{4(1-\rho) t}}{\left(e^{4(1-\rho) t}+x^2+y^2\right)^2}\left(d x^2+d y^2\right) .
\end{equation}
Consider the one-parameter family of diffeomorphisms given by
$$
\begin{gathered}
\varphi_t: \mathbb{R}^2 \longrightarrow \mathbb{R}^2, \\
\varphi_t(x, y)=\left([e^{-2t\sqrt{(1-\rho)}}] x, \; [e^{-2t\sqrt{(1-\rho)} }] y\right)
\end{gathered}
$$
satisfies
$$g(t)=\varphi(t)^\ast g(0),$$
And the vector field generated by the flow is 
\begin{equation}
    \xi=-2\sqrt{1-\rho}\left(x\frac{\partial}{\partial x}+y\frac{\partial}{\partial y}\right).
\end{equation}
Now, we have 
\begin{equation}
    \pounds_\xi g=- \frac{4 e^{4(1-\rho) t}}{\left(e^{4(1-\rho) t}+x^2+y^2\right)^2}\left(d x^2+d y^2\right). 
\end{equation}
Note that
\begin{equation}
\xi = \nabla f, \mbox{  where } f = -\sqrt{1-\rho}\left[\log\left(e^{4(1-\rho) t}+x^2+y^2\right)\right].
\end{equation}
Finally,
\begin{equation}
    \lambda=\frac{2 e^{4(1-\rho) t}\left(x^2+y^2+e^{4(1-\rho) t}-\sqrt{1-\rho}(e^{4(1-\rho) t}+x^2+y^2)-2\rho\right)}{\left(e^{4(1-\rho) t}+x^2+y^2\right)}.
\end{equation}
Hence, we can conclude that $(\mathbb{R}^2,g,\xi,\lambda,\rho)$ is a cigar almost RB-soliton, which can be classified for $\rho \neq 0$ as: 

\begin{enumerate}
    \item If $(x^2+y^2+e^{4(1-\rho) t})<\frac{2\rho}{1-\sqrt{1-\rho}},$ then the almost RB-soliton is expanding.
    \item If $(x^2+y^2+e^{4(1-\rho) t})>\frac{2\rho}{1-\sqrt{1-\rho}},$ then the almost RB-soliton is shrinking.
    \item If $(x^2+y^2+e^{4(1-\rho) t})=\frac{2\rho}{1-\sqrt{1-\rho}},$ then the almost  RB-soliton is steady.
\end{enumerate}
We also see that  $({R}^2,g,\xi,\lambda,\rho = 0)$, is indeed the Hamilton's cigar Ricci soliton $\left({\mathbb{R}^2}, g_0 = \frac{dx^2+dy^2}{1+x^2+y^2}, V = - 2\left(x\frac{\partial}{\partial x}+y\frac{\partial}{\partial y} \right), \lambda=0 \right)$
described above.
\end{example}

\begin{example}[\bf Cigar RB-soliton]
For $\rho=0$, we recover Hamilton's cigar \cite{PT}. The $\rho$-dependence shows:
\begin{itemize}
    \item Phase transitions at $\rho = \frac{1}{2(n-1)}$.
    \item Behaviour changes predicted in Remark \ref{rho-remark}.
\end{itemize}
\end{example}

\begin{example}[{\bf Almost RB-soliton on warped product}]

Let $h : I \rightarrow \mathbb{R}^{+}$ be a smooth function defined as follows: 
\begin{align}
    h(t)=h'(0)\,\mbox{sn}_{-c}(t)+h(0)\,\mbox{cn}_{-c}(t),  
\end{align}
where 
\begin{align}
    \mbox{sn}_k\,(t)= \begin{cases}
        \frac{1}{\sqrt {-k}} \sinh({\sqrt{-kt}}), &\text{ if } k<0 \\
        \ t, &\text{ if } k=0 \\
         \frac{1}{\sqrt k} \sinh({\sqrt {kt}}), &\text{ if } k>0
     \end{cases}
     \quad \text{ and }\quad \mbox{cn}_k \, (t)=\mbox{sn}_k' \,(t).
\end{align}
Let $(N,\bar g)$ be an $n$-dimensional manifold satisfying 
\begin{align*}
    \operatorname{Ric}_{\bar g} =-(n-1)\bigg \{-h'(0)^2 +c\,h(0)^2 \bigg\}\, \bar g,
\end{align*}
and $I$ be an interval in $\mathbb{R}$ containing $0$. Then the warped product $M = I \times_{h} N $ is Einstein with $\operatorname{Ric}_g=-\mbox{nc}\,\mbox{g}$ by Lemma 2.1 of \cite{pigola2011ricci}. Let $\{e_j\}_{1 \leq j \leq n}$  be an orthonormal frame of $N$ with dual frame $\{\psi^{j}\}_{1 \leq j \leq n}$ so that $\bar g =\sum_{i=1}^n \psi^i \wedge \psi^i$. Now, introduce local orthonormal coframe $\{\vartheta^{a}\}_{1 \leq a \leq n+1}$ on $M$ such that $\vartheta^j=h(t)\, \psi^j$ and $\vartheta^{n+1}=dt$. Next, consider a function $f(x,t)=f(t)$ whose Hessian is given by
\[
\operatorname{Hess}\,(f) =f'\,\frac{h'}{h} \sum \vartheta^k \otimes \vartheta^k +f'' \,\vartheta^{n+1} \otimes \vartheta^{n+1},
\] 
see details \cite[p.764-766]{pigola2011ricci}. Then the 
gradient almost RB  soliton equation with $f(x,t) =f(t)$ as potential function yields the following two equations:
\begin{align}
    f'\,\frac{h'}{h}- nc&=\lambda -\rho n(n+1)c, \text{  and  }\\
    f'' -nc&= \lambda -\rho n(n+1)c,
\end{align}
which, on simplification, results in
\begin{align}
    f(t)&=a \int_0^th(s) \ ds+b, \text{  and  } \\
    \lambda(t)&=a\,h'(t)+ nc\,[(n+1)\rho-1],
\end{align}
where $a$ and $b$ are arbitrary real numbers.
Hence, $(M^n,g)$ admits a gradient almost RB-soliton with potential function $f$, soliton function $\lambda$, and $\rho $ being arbitrary.
\end{example}
\vspace{0.1in}


\section{Compact almost RB-solitons}\label{crb}	 
This section is divided into two subsections. 
In subsection \ref{cur}, we first find expressions for the curvature tensor, Ricci tensor, and divergence of the potential vector field about an almost RB-soliton, useful in our further developments. Employing these 
expressions, in subsection \ref{inid}, we derive some generalized integral identities for compact almost RB-solitons. Deploying these identities, we will show that compact almost RB-solitons are isometric to spheres or are trivial.\\

\noindent 
In the sequel, $\mathfrak{X}(M)$ denotes the set of smooth vector fields on $M$. 

\subsection{The curvature tensor of an almost RB-soliton}\label{cur}
Now we first find the curvature tensor of an almost RB-soliton $(M^n, g, \xi, \lambda,\rho)$ and subsequently obtain the Ricci operator
of the soliton. The curvature tensor $R$ of $(M^n, g, \xi, \lambda,\rho)$ can be expressed as:
$$R(X_1, X_2) X_3=\nabla_{X_1} \nabla_{X_2} X_3-\nabla_{X_2} \nabla_{X_1} X_3-\nabla_{[X_1, X_2]} X_3, \; \mbox{for} \; X_1, X_2, X_3 \in \mathfrak{X}(M).$$
Let $\left\{e_{1}, \ldots, e_{n}\right\}$ be a local orthonormal frame  of $M^n$,  then the Ricci tensor is given by
$$
\operatorname{Ric}(X_1, X_2)=\sum_{i=1}^{n} g\left(R\left(e_{i}, X_1\right) X_2, e_{i}\right).$$
The Ricci operator $Q$ is defined by $\operatorname{Ric}(X, Y)=$ $g(Q X, Y)$, which is a symmetric operator and the gradient $\nabla S$ of the scalar curvature $S$ satisfies the
Following the contracted Bianchi identity:
\begin{equation}\label{csbi}
    \frac{1}{2} \nabla S=\sum_{i=1}^{n}(\nabla Q)\left(e_{i}, e_{i}\right).
\end{equation}
Let $\eta$ be the smooth $1$-form dual to the potential field $\xi$. Define a skew-symmetric operator $\varphi$ on $M$ by
\begin{equation}\label{phi}
    \frac{1}{2} d \eta(X_1, X_2)=g(\varphi X_1, X_2), \mbox{ for } X_1, X_2 \in \mathfrak{X}(M) .
\end{equation}
As $Q$ and $\varphi$ are symmetric and skew-symmetric, respectively, then it can be easily
shown that $\nabla Q$ and $\nabla \varphi$ 
are also symmetric and skew-symmetric, respectively, in the following sense:
\begin{eqnarray}\label{sks1}
g((\nabla Q)(X_1,X_2),X_3)=g(X_2,(\nabla Q)(X_1,X_3))
\end{eqnarray}
\mbox{and}    
\begin{eqnarray}\label{sks2}
g((\nabla \varphi)(X_1,X_2),X_3)=-g(X_2,(\nabla\varphi )(X_1,X_3)).
\end{eqnarray}
Then from  \eqref{rbsoliton} and \eqref{phi}, the covariant derivative of the potential field $\xi$ is given by
\begin{equation}\label{cdopf}
    \nabla_{X_1} \xi=(\lambda +\rho S) X_1-Q(X_1)+\varphi X_1, \mbox{ for } X_1 \in \mathfrak{X}(M).
\end{equation}
Employing \eqref{cdopf}, the curvature tensor of an  almost RB-soliton $(M^n,g,\xi,\lambda,\rho)$
can be given as follows:
\begin{eqnarray} \label{ctrbs}
    R(X_1,X_2)\xi= &X_1(\lambda+\rho S)X_2- X_2(\lambda+\rho S)X_1-(\nabla Q)(X_1,X_2)\\ \nonumber
             & +(\nabla Q)(X_2,X_1)-(\nabla \varphi)(X_1,X_2)-(\nabla \varphi)(X_2,X_1).
\end{eqnarray}
As a consequence, we obtain the Ricci operator and divergence of the potential vector field $\xi$  for 
an almost RB-soliton as follows.
Availing \eqref{ctrbs} along with \eqref{sks1} and \eqref{sks2} implies \eqref{rorbs} below.

\begin{equation}\label{rorbs}
    \operatorname{Q}(\xi)=-(n-1)\nabla (\lambda +\rho S) + \frac{1}{2}\nabla S-\operatorname{div}\varphi,
\end{equation}
 Moreover, 
using the skew symmetry of the operator $\varphi$ and  \eqref{cdopf}, we compute the divergence $\operatorname{div} \xi$ of the potential field $\xi$ and obtain  (\ref{div}) as
follows.
\begin{equation}\label{div}
    \operatorname{div} \xi=(n (\lambda +\rho S)-S).
\end{equation}


\subsection{Integral identities on compact almost RB-soliton}
\label{inid}
In this subsection, we find some integral identities, namely, Lemma \ref{lem1}-Lemma \ref{lem5}, for compact almost RB-solitons, by utilizing the results of the subsection \ref{cur}. These identities will serve as the foundation for the main theorems presented in the later sections.

In what follows, the squared norm $\|Q\|^{2}$ of the Ricci operator $Q$ is given by
\[
\|Q\|^{2}=\sum_{i=1}^{n} g\left(Q e_{i}, Q e_{i}\right).
\]
And we will also denote the  Hessian operator
related to the potential function $\lambda$  by
\[
A_\lambda X_1=\nabla_{X_1}\nabla\lambda,
\]
which is symmetric and $\operatorname{tr}A_\lambda=
\Delta\lambda.$

Now we begin by affirming the following identity. 
\begin{lemma}\label{lem1}
    Let $(M^n, g, \xi, \lambda,\rho)$ be a compact almost RB-soliton. Then
\begin{equation}\label{e1}
\int_{M}\left((\lambda+\rho S)S -\|Q\|^{2}-\frac{1}{2} S(n (\lambda +\rho S)-S)\right)=0.
\end{equation}
\end{lemma}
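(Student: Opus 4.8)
The plan is to integrate the two scalar identities obtained in Subsection~\ref{cur}, namely \eqref{rorbs} and \eqref{div}, against suitable functions on the compact manifold $M$ and then invoke the divergence theorem (which kills all exact terms since $M$ is closed). The starting point is to pair the Ricci-operator identity \eqref{rorbs} with the potential field $\xi$ itself. Taking the inner product of \eqref{rorbs} with $\xi$ gives
\[
\operatorname{Ric}(\xi,\xi)=g(Q\xi,\xi)=-(n-1)\,\xi(\lambda+\rho S)+\tfrac12\,\xi(S)-g(\operatorname{div}\varphi,\xi),
\]
but a cleaner route is to take the full trace of the covariant-derivative relation \eqref{cdopf}. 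First I would compute $\operatorname{div}(Q\xi)$ in two ways: expanding $\operatorname{div}(Q\xi)=g(\operatorname{div}Q,\xi)+\langle Q,\nabla\xi\rangle$, then substituting $\operatorname{div}Q=\tfrac12\nabla S$ from the contracted Bianchi identity \eqref{csbi} and $\nabla\xi=(\lambda+\rho S)\mathrm{Id}-Q+\varphi$ from \eqref{cdopf}. Since $\langle Q,\varphi\rangle=0$ (symmetric contracted with skew-symmetric) and $\langle Q,\mathrm{Id}\rangle=S$, this yields
\[
\operatorname{div}(Q\xi)=\tfrac12\,g(\nabla S,\xi)+(\lambda+\rho S)S-\|Q\|^{2}.
\]

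Next I would integrate this over $M$; the left side vanishes by the divergence theorem, leaving
\[
\int_M\Big((\lambda+\rho S)S-\|Q\|^{2}+\tfrac12\,g(\nabla S,\xi)\Big)=0.
\]
It then remains to rewrite the term $\int_M \tfrac12\,g(\nabla S,\xi)$. Integrating by parts, $\int_M g(\nabla S,\xi)=-\int_M S\,\operatorname{div}\xi$, and now I substitute the divergence formula \eqref{div}, namely $\operatorname{div}\xi=n(\lambda+\rho S)-S$. This gives $\int_M g(\nabla S,\xi)=-\int_M S\,(n(\lambda+\rho S)-S)$, and plugging back produces exactly
\[
\int_M\Big((\lambda+\rho S)S-\|Q\|^{2}-\tfrac12\,S\,(n(\lambda+\rho S)-S)\Big)=0,
\]
which is \eqref{e1}.

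The computation is essentially a bookkeeping exercise once the right quantity ($\operatorname{div}(Q\xi)$) is identified, so the main obstacle is mostly organizational: making sure the Bianchi identity \eqref{csbi} is applied with the correct factor $\tfrac12$, that the cross term $\langle Q,\varphi\rangle$ is correctly argued to vanish, and that the two separate integrations by parts are combined without sign errors. One subtlety worth checking is whether $\xi$ is assumed to be a gradient field: the identity \eqref{e1} as stated does not require $\xi=\nabla f$, and indeed the argument above via $\operatorname{div}(Q\xi)$ works for an arbitrary potential vector field, with the skew-symmetric part $\varphi$ dropping out at exactly the right place. If one instead wanted to mirror the gradient-soliton proofs in \cite{BBR}, one could take $\xi=\nabla f$ and use $\int_M g(Q\nabla f,\nabla f)=\int_M(\operatorname{Hess} f)(\,\cdot\,)$ type manipulations, but the divergence-theorem approach above is shorter and keeps the scalar-curvature term $\rho S$ transparent throughout.
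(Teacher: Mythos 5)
Your proof is correct and follows essentially the same route as the paper: both compute $\operatorname{div}(Q\xi)$ via the contracted Bianchi identity \eqref{csbi} and the covariant derivative formula \eqref{cdopf} (with $\langle Q,\varphi\rangle=0$ killing the skew part), and then eliminate the $\tfrac12\,\xi(S)$ term using \eqref{div} and the divergence theorem. The only cosmetic difference is that the paper subtracts $\tfrac12\operatorname{div}(S\xi)$ before integrating, whereas you integrate by parts afterwards; these are the same manipulation.
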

\begin{proof}
Using  \eqref{csbi}, \eqref{cdopf} and the fact that $Q$ and $\varphi$ are symmetric and skew-symmetric operators, respectively, we affirm
$$\operatorname{div} Q(\xi)=S(\lambda +\rho S)-\|Q\|^{2}+\frac{1}{2} \xi(S).$$
Substituting \eqref{div} in the aforementioned
equation yields that
$$
\operatorname{div} Q(\xi)-\frac{1}{2} \operatorname{div}(S \xi)= S(\lambda +\rho S)-\|Q\|^{2}-\frac{1}{2} S(n (\lambda +\rho S)-S).
$$
 Integrating the above expression over a compact almost RB-soliton gives the desired result.
\end{proof}

\indent
We obtain the following novel identities in the next two lemmas. We generalise Theorem 1.11 from \cite{SDRBS}. It should be noted that (\ref{10.3}) of Lemma \ref{lem2} and (\ref{11a}) of  Lemma \ref{lem3} match with (1.9) and (1.8),
respectively of \cite{SDRBS}, but our proofs here are completely different from his proof.
 Note that the expression  (\ref{10.3}) given below generalizes item~$3$ of Proposition~$1$ which is for almost Ricci solitons in \cite{barros2013note}, to almost RB solitons;  and is invariant for almost RB solitons. \\

\begin{lemma}\label{lem2}
     Let $(M^n, g, \xi, \lambda,\rho)$ be a compact almost RB-soliton. Then
    \begin{equation}\label{10.3}
        \int_M\left|\operatorname{Ric}-\frac{S}{n} g\right|^{2} =  \frac{(n-2)}{2n}\int_M g(\nabla S,\xi).
    \end{equation}
\end{lemma}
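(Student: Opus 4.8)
The plan is to compute the divergence of the vector field $Q(\xi)$ in a second way and combine it with the identity already derived in Lemma~\ref{lem1}. First I would observe that, from \eqref{div}, $\operatorname{div}\xi = n(\lambda+\rho S) - S$, and from the trace of \eqref{rbsoliton} we get $S + \operatorname{div}\xi = n(\lambda+\rho S)$, which is the same relation; the real input is the traceless part of the soliton equation. Writing $\mathring{\operatorname{Ric}} = \operatorname{Ric} - \frac{S}{n}g$ and decomposing $\frac{1}{2}\pounds_\xi g = (\lambda+\rho S)g - \operatorname{Ric}$, the traceless part of $\frac{1}{2}\pounds_\xi g$ equals $-\mathring{\operatorname{Ric}}$. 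Then I would pair this with $\operatorname{Ric}$ (equivalently with $\mathring{\operatorname{Ric}}$, since the trace part drops out) and integrate.

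The key computational step is the integration-by-parts identity
\[
\int_M \langle \tfrac{1}{2}\pounds_\xi g, \operatorname{Ric}\rangle = -\tfrac{1}{2}\int_M \langle \xi, \operatorname{div}(2\operatorname{Ric})\rangle = -\tfrac{1}{2}\int_M g(\nabla S,\xi),
\]
where the last equality uses the contracted Bianchi identity \eqref{csbi}, i.e. $\operatorname{div}\operatorname{Ric} = \frac{1}{2}\nabla S$, together with the standard formula $\int_M \langle \pounds_\xi g, T\rangle = -2\int_M \langle \xi,\operatorname{div}T\rangle$ for a symmetric $2$-tensor $T$ on a compact manifold. On the other hand, substituting the full soliton equation and taking the inner product with $\mathring{\operatorname{Ric}}$,
\[
\int_M \langle -\mathring{\operatorname{Ric}},\, \mathring{\operatorname{Ric}}\rangle = \int_M \langle (\lambda+\rho S)g - \operatorname{Ric},\, \operatorname{Ric}\rangle - \tfrac{1}{n}\int_M \langle S g, \operatorname{Ric}\rangle + (\text{trace terms that cancel}),
\]
so that $-\int_M |\mathring{\operatorname{Ric}}|^2$ can be rewritten using $\langle \operatorname{Ric},\operatorname{Ric}\rangle = \|Q\|^2$, $\langle g,\operatorname{Ric}\rangle = S$, and $|\mathring{\operatorname{Ric}}|^2 = \|Q\|^2 - \frac{S^2}{n}$. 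Combining the two evaluations of $\int_M \langle \tfrac{1}{2}\pounds_\xi g,\operatorname{Ric}\rangle$ gives
\[
\int_M\Big(\|Q\|^2 - (\lambda+\rho S)S\Big) = -\tfrac{1}{2}\int_M g(\nabla S,\xi) + \int_M \tfrac{S^2}{n},
\]
and then Lemma~\ref{lem1} supplies the second relation needed to eliminate $\int_M\big((\lambda+\rho S)S - \|Q\|^2\big)$ in favor of $\int_M S(n(\lambda+\rho S)-S)/2 = \int_M S\,(\operatorname{div}\xi)/2 = -\tfrac{1}{2}\int_M g(\nabla S,\xi)$ (integrating by parts again). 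Tracking the coefficients, the $\|Q\|^2$ and $S^2$ terms should organize exactly into $\int_M |\mathring{\operatorname{Ric}}|^2$ on the left and $\frac{n-2}{2n}\int_M g(\nabla S,\xi)$ on the right.

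The main obstacle I anticipate is purely bookkeeping: making sure the trace/traceless decomposition is done consistently and that every integration by parts on the compact manifold is applied with the correct sign, so that the scalar-curvature terms coming from Lemma~\ref{lem1} and from the direct computation combine with the precise coefficient $\frac{n-2}{2n}$ rather than something off by a factor. A secondary point worth checking is that the skew-symmetric part $\varphi$ of $\nabla\xi$ contributes nothing when paired against the symmetric tensor $\operatorname{Ric}$, which is immediate but should be stated. No new geometric ideas beyond \eqref{csbi}, \eqref{div}, and Lemma~\ref{lem1} seem to be required.
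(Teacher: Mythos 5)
Your strategy is fundamentally sound and, once the bookkeeping is straightened out, it does yield \eqref{10.3}; in substance it is the paper's own argument in different packaging. The paper obtains the Bianchi-based integration by parts through Lemma \ref{lem1} (which is the pointwise identity $\operatorname{div}(Q\xi)=\langle\operatorname{div}Q,\xi\rangle+\langle Q,\nabla\xi\rangle$ integrated) and then applies the divergence theorem to $S\xi$; you obtain the same input by evaluating $\int_M\langle\tfrac12\pounds_\xi g,\operatorname{Ric}\rangle$ two ways. Two concrete corrections. First, your ``combined'' identity is false as written: equating your two evaluations gives
\[
\int_M\Bigl(\|Q\|^2-(\lambda+\rho S)S\Bigr)=+\tfrac12\int_M g(\nabla S,\xi),
\]
with the opposite sign on the right and no $\int_M S^2/n$ term. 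Second, this relation is \emph{not} independent of Lemma \ref{lem1}: integrating $\operatorname{div}(S\xi)=\xi(S)+S\,\operatorname{div}\xi$ over $M$ and using \eqref{div} turns Lemma \ref{lem1} into exactly the same equation, so ``combining the two evaluations with Lemma \ref{lem1}'' as you propose produces $0=0$ and stalls. The genuinely independent second ingredient is the conversion of the trace term,
\[
\int_M\frac{S^2}{n}=\int_M S(\lambda+\rho S)+\frac1n\int_M g(\nabla S,\xi),
\]
which again follows from \eqref{div} and $\int_M\operatorname{div}(S\xi)=0$. Subtracting this from the boxed relation above and using $\bigl|\operatorname{Ric}-\tfrac{S}{n}g\bigr|^2=\|Q\|^2-\tfrac{S^2}{n}$ gives $\bigl(\tfrac12-\tfrac1n\bigr)\int_M g(\nabla S,\xi)=\tfrac{n-2}{2n}\int_M g(\nabla S,\xi)$, as required. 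Your observation that the skew part $\varphi$ of $\nabla\xi$ pairs to zero against the symmetric tensor $\operatorname{Ric}$ is correct but already built into working with $\pounds_\xi g$ rather than $\nabla\xi$.
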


\begin{proof}

    We have
    \begin{equation}\label{ricsbyn}
        \left|\operatorname{Ric}-\frac{S}{n} g\right|^{2}=\|Q\|^{2}-\frac{S^{2}}{n}.
    \end{equation}
Now \eqref{div} yields that,
$$
\frac{1}{n} \operatorname{div}(S \xi)=\frac{1}{n} \xi(S)+ S(\lambda +\rho S)-\frac{S^{2}}{n}.
$$
Thus,
$$
\left|R i c-\frac{S}{n} g\right|^{2}=\|Q\|^{2}+\frac{1}{n} \operatorname{div}(S \xi)-\frac{1}{n} \xi(S)-S(\lambda + \rho S).
$$
This implies that,
$$
\int_{M}\left|R i c-\frac{S}{n} g\right|^{2}=\int_{M}\left(\|Q\|^2-\frac{1}{n}\xi(S)-S(\lambda+\rho S)\right) .
$$
Applying Lemma \ref{lem1}, we conclude
$$
\int_{M}\left|R i c-\frac{S}{n} g\right|^{2}=\int_{M}\left(\frac{S^2}{2}-\frac{1}{n}\xi(S)-\frac{1}{2}S(n(\lambda+\rho S)\right) .
$$
Now using  $\frac{1}{2} \operatorname{div}(S \xi) = \frac{1}{2}\xi(S)+ \frac{S}{2} \operatorname{div} \xi$ and  \eqref{div} in the above equation, we infer
    \begin{equation}\label{10}
    \int_M\left|\operatorname{Ric}-\frac{S}{n} g\right|^{2}=\frac{(n-2)}{2n}\int_M g(\nabla S,\xi).
\end{equation}
\end{proof}

 Now, considering our RB-soliton to be a gradient, by utilising the aforementioned lemma, we obtain 
the following important {\it Ricci identity}.

\begin{lemma}\label{lem3}
    Let $(M^{n}, g, \xi, \lambda,\rho)$ be compact gradient almost RB-soliton. Then 
 \begin{equation}\label{12a}
        \int_M\left|\nabla^2 f - \frac{\Delta f}{n}g\right|^{2}=\frac{(n-2)}{4n(1-2\rho(n-1))}\int_M \operatorname{Ric}(\nabla f, \nabla f)+ (n-1)\,g(\nabla\lambda,\nabla f),
    \end{equation}
for  $\rho\neq\frac{1}{2(n-1)}$. \\
\mbox{And}
    \begin{equation}\label{11a}
        \int_M\left|\nabla^2 f - \frac{\Delta f}{n}g\right|^{2}=\frac{(n-2)}{2n}\int_M g(\nabla S,\nabla f).
    \end{equation}
\end{lemma}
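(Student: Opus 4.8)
\textbf{Proof proposal for Lemma \ref{lem3}.}

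The plan is to specialize Lemma \ref{lem2} to the gradient case $\xi=\nabla f$ and then rewrite the right-hand side in two ways, producing \eqref{11a} directly and \eqref{12a} after eliminating $\nabla S$. First I would observe that when $\xi=\nabla f$ the skew-symmetric operator $\varphi$ vanishes (since $d\eta=d(df)=0$), so $\tfrac12\pounds_\xi g=\nabla^2 f$ and the soliton equation \eqref{rbsoliton} becomes $\operatorname{Ric}+\nabla^2 f=(\lambda+\rho S)g$. Tracing this gives $S+\Delta f=n(\lambda+\rho S)$, hence $\Delta f = n(\lambda+\rho S)-S = \operatorname{div}\xi$ by \eqref{div}, consistent with the traceless-part bookkeeping. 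Now $\operatorname{Ric}-\tfrac{S}{n}g = -(\nabla^2 f-\tfrac{\Delta f}{n}g)$, so $\big|\operatorname{Ric}-\tfrac{S}{n}g\big|^2 = \big|\nabla^2 f-\tfrac{\Delta f}{n}g\big|^2$ pointwise, and substituting into \eqref{10.3} together with $\xi=\nabla f$ immediately yields \eqref{11a}.

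For \eqref{12a} the idea is to express $g(\nabla S,\nabla f)$ in terms of $\operatorname{Ric}(\nabla f,\nabla f)$ and $g(\nabla\lambda,\nabla f)$ using the contracted Bianchi identity adapted to the RB-soliton. Taking the divergence of $\operatorname{Ric}+\nabla^2 f=(\lambda+\rho S)g$ and using $\operatorname{div}\operatorname{Ric}=\tfrac12\nabla S$ (equation \eqref{csbi}) together with the Bochner-type identity $\operatorname{div}(\nabla^2 f)=\operatorname{Ric}(\nabla f,\cdot)+\nabla(\Delta f)$, one gets a vector identity of the form $\tfrac12\nabla S + \operatorname{Ric}(\nabla f,\cdot)+\nabla(\Delta f)=\nabla\lambda+\rho\nabla S$; substituting $\Delta f = n(\lambda+\rho S)-S$ and contracting with $\nabla f$ produces a linear relation $\big(\tfrac12-\rho+\rho n\big)g(\nabla S,\nabla f) + \operatorname{Ric}(\nabla f,\nabla f) = n\,g(\nabla\lambda,\nabla f)+\big(\text{terms that vanish under }\int_M\big)$ — more precisely, terms that are divergences once integrated, so that after integrating over the compact manifold $M$ one can solve for $\int_M g(\nabla S,\nabla f)$. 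The coefficient $\tfrac12-\rho+\rho n = \tfrac12(1-2\rho(n-1))+\dots$ — I would track it carefully; it should come out proportional to $1-2\rho(n-1)$, which is exactly why the hypothesis $\rho\neq\frac{1}{2(n-1)}$ is needed to divide. Plugging the resulting expression for $\int_M g(\nabla S,\nabla f)$ back into \eqref{11a} gives \eqref{12a}.

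The main obstacle I anticipate is getting the constants exactly right in the Bianchi-type computation: one must be careful with the factor of $\rho$ multiplying $\nabla S$ on the right-hand side of the differentiated soliton equation and with the appearance of $\nabla S$ again inside $\nabla(\Delta f)=\nabla(n\lambda+n\rho S-S)=n\nabla\lambda+(n\rho-1)\nabla S$, since these combine to give the coefficient $1-2\rho(n-1)$ (up to an overall factor) of $g(\nabla S,\nabla f)$. A secondary point requiring care is justifying that every auxiliary term produced along the way — e.g. $\operatorname{div}(S\nabla f)$, $\operatorname{div}((\Delta f)\nabla f)$, $\operatorname{div}(\lambda\nabla f)$ type expressions — integrates to zero on the closed manifold $M$, which is routine but must be stated. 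Once the linear relation among $\int_M g(\nabla S,\nabla f)$, $\int_M\operatorname{Ric}(\nabla f,\nabla f)$, and $\int_M g(\nabla\lambda,\nabla f)$ is established, combining it with \eqref{11a} is purely algebraic.
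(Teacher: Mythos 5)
Your proposal is correct and follows essentially the same route as the paper: \eqref{11a} comes from substituting $\operatorname{Ric}-\tfrac{S}{n}g=-\bigl(\nabla^{2}f-\tfrac{\Delta f}{n}g\bigr)$ into Lemma \ref{lem2}, and \eqref{12a} from the identity $(1-2\rho(n-1))\nabla S=2\operatorname{Ric}(\nabla f,\cdot)+2(n-1)\nabla\lambda$, which the paper simply cites from Dwivedi's work while you rederive it by taking the divergence of the gradient soliton equation. The only refinement is that this identity is pointwise and exact, so the extra divergence terms you anticipate having to discard under $\int_{M}$ do not actually arise; contracting with $\nabla f$ and substituting into \eqref{11a} finishes the argument exactly as in the paper.
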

\begin{proof}
 Observe that
\begin{align}
    \operatorname{Ric} - \frac{S}{n}g =-\nabla^2 f + \frac{\Delta f}{n}g.\label{2.15}
\end{align}
 Equivalently,
  \begin{equation}
        \int_M\left|\nabla^2 f - \frac{\Delta f}{n}g\right|^{2}=\frac{(n-2)}{2n}\int_M g(\nabla S,\nabla f),
    \end{equation}
availing (\ref{10}). Now to prove \eqref{12a}, we use the following expression $(2.2)$ 
of \cite{SDRBS}.$$
(1-2\rho(n-1))\nabla_i S = 2 R_{ij} \nabla_l f + 2(n-1)\nabla_i\lambda,
$$
Which implies that
$$
(1-2\rho(n-1))g(\nabla S,\nabla f) = 2 \operatorname{Ric} (\nabla f,\nabla f) + 2(n-1)g(\nabla\lambda,\nabla f).
$$
 Integrating the above expression, we obtain the desired result. 
\end{proof}

\begin{remark}
Lemma~2.3 generalizes Theorem~4 in \cite{barros2012some}.
\end{remark}

Now, we shall proceed to demonstrate our succeeding result, which is a direct application of  \eqref{cdopf} and \eqref{div}.
\begin{lemma}\label{lem4}
    Let $(M^n,g,\xi,\lambda,\rho)$ be a compact almost RB-soliton. Then 
    $$
    \int_M\left(\operatorname{Ric}(\nabla\lambda,\xi)+(n-1)\|\nabla\lambda\|^2+\frac{1}{2}S\Delta\lambda-(n-1)\rho S \Delta \lambda\right)=0.
    $$
\end{lemma}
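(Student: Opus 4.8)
The plan is to integrate the divergence of a suitably chosen vector field built from the potential function $\lambda$ and the soliton vector field $\xi$. The natural candidate is $W = (\Delta\lambda)\,\xi$ together with a term coming from $Q(\nabla\lambda)$, but the cleanest route is to compute $\operatorname{div}\big((\Delta\lambda)\,\xi\big)$ and $\operatorname{div}\big(\nabla_{\nabla\lambda}\,\xi\big)$ separately. First I would recall from \eqref{cdopf} that $\nabla_{X}\xi = (\lambda+\rho S)X - QX + \varphi X$, so contracting with $\nabla\lambda$ gives
\[
g(\nabla_{\nabla\lambda}\xi, Y) = (\lambda+\rho S)g(\nabla\lambda,Y) - \operatorname{Ric}(\nabla\lambda,Y) + g(\varphi\nabla\lambda,Y).
\]
Taking $Y = \nabla\lambda$ kills the skew term and produces $(\lambda+\rho S)\|\nabla\lambda\|^2 - \operatorname{Ric}(\nabla\lambda,\nabla\lambda)$; this is one of the ingredients, though the stated identity involves $\operatorname{Ric}(\nabla\lambda,\xi)$ rather than $\operatorname{Ric}(\nabla\lambda,\nabla\lambda)$, so the vector field must be chosen so that $\xi$, not $\nabla\lambda$, appears in the Ricci term.

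The correct choice is therefore to consider $\operatorname{div}\big((\Delta\lambda)\xi\big)$ and separately relate $\Delta\lambda$-weighted quantities to $Q(\xi)$. Concretely, I would compute
\[
\operatorname{div}\big((\Delta\lambda)\,\xi\big) = g(\nabla(\Delta\lambda),\xi) + (\Delta\lambda)\operatorname{div}\xi = g(\nabla(\Delta\lambda),\xi) + (\Delta\lambda)\big(n(\lambda+\rho S) - S\big),
\]
using \eqref{div}. On a compact manifold the left side integrates to zero. To handle $\int_M g(\nabla(\Delta\lambda),\xi)$ I would instead integrate $\operatorname{div}\big(A_\lambda\,\xi\big)$ where $A_\lambda X = \nabla_X\nabla\lambda$ is the Hessian operator with $\operatorname{tr}A_\lambda = \Delta\lambda$: expanding,
\[
\operatorname{div}(A_\lambda\xi) = \sum_i g\big((\nabla_{e_i}A_\lambda)\xi,e_i\big) + \sum_i g\big(A_\lambda(\nabla_{e_i}\xi),e_i\big).
\]
The second sum equals $\operatorname{tr}(A_\lambda\circ\nabla\xi)$, and substituting $\nabla_{e_i}\xi = (\lambda+\rho S)e_i - Qe_i + \varphi e_i$ and using symmetry of $A_\lambda$ (so the $\varphi$-contribution vanishes) gives $(\lambda+\rho S)\Delta\lambda - g(A_\lambda, Q)$ where $g(A_\lambda,Q) = \sum_i \operatorname{Ric}(e_i,\nabla_{e_i}\nabla\lambda)$. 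For the first sum, the commutation of covariant derivatives on $\nabla\lambda$ brings in a Ricci-curvature term $\operatorname{Ric}(\nabla\lambda,\cdot)$; one gets, after tracing, a combination of $\operatorname{div}(A_\lambda\nabla\lambda) = \frac12\Delta\|\nabla\lambda\|^2$-type terms plus $\operatorname{Ric}(\nabla\lambda,\xi)$-type terms via a Bochner-style manipulation.

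In practice the slickest organization is: (i) start from $\int_M \operatorname{div}\big(Q(\nabla\lambda)\big) = 0$ and note $\operatorname{div}(Q(\nabla\lambda)) = \frac12 g(\nabla S,\nabla\lambda) + g(Q,\nabla^2\lambda)$ by the contracted Bianchi identity \eqref{csbi}; (ii) start from $\int_M \operatorname{div}\big((\Delta\lambda)\xi\big) = 0$ and use \eqref{div}; (iii) use the gradient (or general) soliton relation together with \eqref{rorbs}, i.e. $Q(\xi) = -(n-1)\nabla(\lambda+\rho S) + \frac12\nabla S - \operatorname{div}\varphi$, to replace $g(Q\xi,\nabla\lambda) = \operatorname{Ric}(\xi,\nabla\lambda)$ by derivative terms in $\lambda$ and $S$; (iv) invoke the Bochner formula $\int_M \operatorname{Ric}(\nabla\lambda,\nabla\lambda) + \|\nabla^2\lambda\|^2 - (\Delta\lambda)^2 = 0$ if needed to convert Hessian-norm terms. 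Combining (i)--(iv) and collecting the coefficients of $\|\nabla\lambda\|^2$, $S\Delta\lambda$, $\rho S\Delta\lambda$, and $\operatorname{Ric}(\nabla\lambda,\xi)$ should yield exactly the claimed identity. The main obstacle I anticipate is bookkeeping: correctly tracking the Ricci term that emerges from commuting derivatives (ensuring it appears as $\operatorname{Ric}(\nabla\lambda,\xi)$ and not $\operatorname{Ric}(\nabla\lambda,\nabla\lambda)$ or $\operatorname{Ric}(\xi,\xi)$), and verifying that all $\varphi$-dependent and pure-divergence terms integrate to zero on the compact manifold so that the final identity is genuinely $\varphi$-free and matches the stated coefficients $(n-1)$ and $\tfrac12$ and $-(n-1)\rho$.
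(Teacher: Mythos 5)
Your proposal follows essentially the same route as the paper: its proof likewise combines $\operatorname{div}(A_\lambda\xi)$, $\operatorname{div}(Q(\nabla\lambda))$ and $\operatorname{div}((\Delta\lambda)\xi)$, using the commutation identity $\operatorname{div}A_\lambda=\operatorname{Ric}(\nabla\lambda,\cdot)+d(\Delta\lambda)$, the divergence formula \eqref{div} and the contracted Bianchi identity \eqref{csbi}, then integrates and converts the $\lambda\Delta\lambda$ and $S\Delta\lambda$ terms exactly as you outline. Steps (iii)--(iv) of your organization (the relation \eqref{rorbs} and the Bochner formula) turn out to be unnecessary, and the remaining bookkeeping closes as you predict.
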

\begin{proof}
Using \eqref{cdopf} and  the expression $(2.8)$ of \cite{SDANARS}, we obtain
\[
\operatorname{div}(A_\lambda\xi)=(\lambda +\rho S)\Delta\lambda-\sum g\left(A_\lambda e_i,Qe_i\right)+\operatorname{Ric}(\nabla\lambda,\xi)+\xi(\Delta\lambda),
\]
where $\{e_1,e_2,...,e_n\}$ is local orthonormal frame. Similarly, we compute
$$
\operatorname{div}(Q(\nabla\lambda))=\sum g\left(A_\lambda e_i,Qe_i\right)+\frac{1}{2}g(\nabla S,\nabla\lambda).
$$
Now, by \eqref{div}, we have     
\[
\operatorname{div}(\Delta\lambda\xi)=\xi(\Delta\lambda)-\left(n(\lambda+\rho S\right)-S)\Delta\lambda+g(\nabla S,\nabla\lambda).
\]
Finally, by combining all the expressions, we obtain
\[
\operatorname{div}(A_\lambda\xi) +\operatorname{div}(Q\nabla\lambda)-\operatorname{div}(\Delta\lambda\xi)=\operatorname{Ric}(\nabla\lambda,\xi)-((n-1)(\lambda+\rho S)-S)\Delta\lambda.
\]
 Integrating the aforementioned equation,
\[
\int_M\left(\operatorname{Ric}(\nabla\lambda,\xi)-((n-1)(\lambda+\rho S)-S)\Delta\lambda\right)=0.
\]
After using simple formulas of $\Delta\lambda^2$ and $\operatorname{div}(S\nabla\lambda)$, we get the required result.
\end{proof}

Next, we shall exhibit a lemma pivotal to 
establishing the triviality of almost RB-soliton
(Theorem \ref{thm4.3}).
\begin{lemma}\label{lem5}
Let $(M^n,g,\xi,\lambda,\rho)$ be a compact almost RB-soliton. Then
    \[
    \int_M\left(\operatorname{Ric}(\xi,\xi)-\|\varphi\|^2+\xi((n-1)(\lambda+\rho S)-\frac{1}{2}S)\right)=0.
    \]
\end{lemma}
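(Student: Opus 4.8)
The plan is to integrate a suitable divergence built from the covariant derivative formula \eqref{cdopf} for $\nabla_{X_1}\xi$, mirroring the strategy used in Lemma \ref{lem4}, but now contracting against $\xi$ itself rather than against $\nabla\lambda$. First I would compute $\operatorname{div}(\nabla_\xi\xi)$. Since $\nabla_{X}\xi = (\lambda+\rho S)X - QX + \varphi X$, setting $X=\xi$ gives $\nabla_\xi\xi = (\lambda+\rho S)\xi - Q\xi + \varphi\xi$. Taking the divergence of this and using \eqref{div} for $\operatorname{div}\xi$, the product rule on each term, and the identity $\operatorname{div}(Q\xi) = \tfrac12\xi(S) + \operatorname{Ric}(\xi,\xi) - \ldots$ type relations (obtained exactly as in the proof of Lemma \ref{lem1}), I expect to isolate $\operatorname{Ric}(\xi,\xi)$ together with a term $-\|\varphi\|^2$ coming from $\operatorname{div}(\varphi\xi)$. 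The key computational fact is that for the skew-symmetric operator $\varphi$ one has $g(\nabla_{e_i}(\varphi\xi), e_i)$ expanding into a piece involving $\operatorname{div}\varphi$ paired with $\xi$ and a piece $\sum_i g(\varphi e_i, \nabla_{e_i}\xi)$; substituting \eqref{cdopf} again into $\nabla_{e_i}\xi$ and using skew-symmetry of $\varphi$ and symmetry of $Q$ (so the $Q$ and $(\lambda+\rho S)\mathrm{Id}$ contributions drop) leaves precisely $\sum_i g(\varphi e_i, \varphi e_i) = \|\varphi\|^2$ with the appropriate sign.

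Next I would assemble the remaining first-order terms. The contributions $\operatorname{div}((\lambda+\rho S)\xi) = \xi(\lambda+\rho S) + (\lambda+\rho S)\operatorname{div}\xi$ and the $\Delta$-type terms must be reorganized; using \eqref{div} to replace $\operatorname{div}\xi$ by $n(\lambda+\rho S)-S$, and collecting, I expect all the non-$\operatorname{Ric}(\xi,\xi)$, non-$\|\varphi\|^2$ terms to combine into $\xi\big((n-1)(\lambda+\rho S) - \tfrac12 S\big)$ plus a global divergence. The coefficient $(n-1)$ should emerge naturally from the difference between the $n$ appearing in $\operatorname{div}\xi$ and the contraction producing $\operatorname{Ric}(\xi,\xi)$; one also uses the contracted Bianchi identity \eqref{csbi} to handle the $\nabla S$ terms, just as in Lemma \ref{lem1}. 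Once everything is written as (total divergence) $=$ $\operatorname{Ric}(\xi,\xi) - \|\varphi\|^2 + \xi\big((n-1)(\lambda+\rho S)-\tfrac12 S\big)$, integrating over the compact manifold $M$ kills the left-hand side by the divergence theorem and yields the claimed identity.

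The main obstacle I anticipate is bookkeeping the $\varphi$-terms correctly: one must be careful that $\operatorname{div}\varphi$ appears consistently (it already shows up in \eqref{rorbs}), that the sign of $\|\varphi\|^2$ is right, and that the cross-term $\sum_i g(\varphi e_i, Q e_i)$ vanishes by the symmetry/skew-symmetry pairing. A secondary subtlety is ensuring that terms like $g(\nabla(\lambda+\rho S),\xi)$ and $\xi(\lambda+\rho S)$, which are genuinely the same quantity, are not double-counted when merging the divergence pieces. Provided these are tracked carefully — and using the already-established relations \eqref{csbi}, \eqref{cdopf}, \eqref{div}, and the pattern of Lemma \ref{lem1} and Lemma \ref{lem4} — the identity follows by a direct, if somewhat lengthy, divergence computation followed by integration.
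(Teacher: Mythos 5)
Your plan is essentially the paper's proof: the identity is exactly $\int_M \operatorname{div}(\varphi\xi)=0$, obtained by expanding $\operatorname{div}(\varphi\xi)$ with \eqref{cdopf} (the $Q$ and $(\lambda+\rho S)\mathrm{Id}$ contributions die against the skew-symmetric $\varphi$, leaving $-\|\varphi\|^2$) and converting $-g(\xi,\operatorname{div}\varphi)$ via \eqref{rorbs} into $\operatorname{Ric}(\xi,\xi)+\xi\bigl((n-1)(\lambda+\rho S)-\tfrac12 S\bigr)$. The only adjustments to your sketch are that the extra pieces $\operatorname{div}((\lambda+\rho S)\xi)$ and $\operatorname{div}(Q\xi)$ in your $\operatorname{div}(\nabla_\xi\xi)$ are each total divergences integrating to zero on their own, so they contribute nothing, and the $\operatorname{Ric}(\xi,\xi)$ term actually enters through \eqref{rorbs} applied to the $\operatorname{div}\varphi$ term rather than through the $\operatorname{div}(Q\xi)$ relation.
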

\begin{proof}
    Norm of $\varphi$ is given by
    $$\|\varphi\|^2=\sum g(\varphi e_i,\varphi e_i),$$
    where $\{e_1,e_2,...,e_n\}$ is local orthonormal frame.
    Now, using  \eqref{cdopf} and \eqref{rorbs}, we have
    \begin{align*}
        \operatorname{div}\varphi(\xi)&=-\|\varphi\|^2-g\left(\xi,\sum(\nabla\varphi)(e_i,e_i)\right)\\
        &=-\|\varphi\|^2\operatorname{Ric}(\xi,\xi)+\xi\left((n-1)(\lambda +\rho S)-\frac{1}{2}S\right).
    \end{align*}
   This expression, when integrated, yields the outcome.
\end{proof}

\section{RB-solitons isometric to Euclidean sphere}\label{thm}
In this section, our principal goal is to demonstrate the isometric correspondence between almost RB-solitons and the Euclidean sphere. To accomplish this, we employ hypotheses on Ricci curvature in the direction of the potential vector field. Additionally, we leverage the Poisson equation significantly in various theorems to substantiate our objective.  The first two theorems of this section, 
Theorem \ref{riclb} and Theorem \ref{pos}
are generalizations of Theorem $1$ and Theorem $2$, respectively, proved in \cite{SDARSIS}.


We begin by proving the following theorem, which generalizes \cite[Theorem 2]{barros2012some}.
\begin{theorem}\label{riclb}
For $(M^n,g,\nabla f,\lambda,\rho)$ be a  compact nontrivial gradient almost RB-soliton with $\rho < \frac{1}{2(n-1)}$, if
\begin{equation}\label{sphere-cond}
\int_M \operatorname{Ric}(\nabla f,\nabla f) \geq \frac{n-1}{n}\int_M(n(\lambda+\rho S)-S)^2
\end{equation}
if and only if $M$ is isometric to $\mathbb{S}^n(\frac{S}{n(n-1)})$. 
\end{theorem}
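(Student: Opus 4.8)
The plan is to exploit the gradient structure together with the integral identity of Lemma~\ref{lem3}, specifically the formula
\[
\int_M\left|\nabla^2 f - \frac{\Delta f}{n}g\right|^{2}=\frac{(n-2)}{4n(1-2\rho(n-1))}\int_M \operatorname{Ric}(\nabla f, \nabla f)+ (n-1)\,g(\nabla\lambda,\nabla f),
\]
and to combine it with the divergence identity \eqref{div}. First I would recast the hypothesis \eqref{sphere-cond}: since $(M,g,\nabla f,\lambda,\rho)$ is a gradient almost RB-soliton, the trace of \eqref{rbsoliton} gives $\Delta f = n(\lambda+\rho S)-S = \operatorname{div}\xi$, so the right-hand side of \eqref{sphere-cond} is $\frac{n-1}{n}\int_M(\Delta f)^2$. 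Next, I would bring the term $\int_M (n-1)\,g(\nabla\lambda,\nabla f)$ into a more usable shape; integrating by parts and using $\Delta f = n(\lambda+\rho S)-S$ lets one express $\int_M g(\nabla\lambda,\nabla f)$ in terms of $\int_M \lambda\,\Delta f$ and scalar-curvature terms, and an appeal to Lemma~\ref{lem4} (or a direct manipulation of the traced soliton equation) should convert the mixed term into a multiple of $\int_M(\Delta f)^2$ plus controllable remainders.

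The strategic point is the classical Bochner-type inequality $\int_M |\nabla^2 f|^2 \geq \frac{1}{n}\int_M(\Delta f)^2$, equivalently $\int_M\left|\nabla^2 f - \frac{\Delta f}{n}g\right|^2 \geq 0$, with equality characterizing the situation where $\nabla^2 f = \frac{\Delta f}{n}g$. Under the sign condition $\rho < \frac{1}{2(n-1)}$ the factor $\frac{1}{1-2\rho(n-1)}$ is positive, so Lemma~\ref{lem3} together with the reformulated hypothesis \eqref{sphere-cond} should force
\[
\int_M\left|\nabla^2 f - \frac{\Delta f}{n}g\right|^{2}\leq 0,
\]
hence $\nabla^2 f = \frac{\Delta f}{n}g$ identically. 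Then one shows $\Delta f$ is (up to sign) a positive constant multiple of $f$ after normalizing: differentiating $\nabla^2 f = \frac{\Delta f}{n}g$ and using that $M$ is compact and $f$ nonconstant (nontriviality), one arrives at Obata's equation $\nabla^2 f = -c f\,g$ for a positive constant $c$, and Obata's theorem then yields that $M$ is isometric to a round sphere, with the radius pinned down by the value of $S$, giving $\mathbb{S}^n\!\left(\frac{S}{n(n-1)}\right)$.

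For the converse, I would start from $\mathbb{S}^n\!\left(\frac{S}{n(n-1)}\right)$, on which $S$ is constant and $\operatorname{Ric} = \frac{S}{n}g$; one checks that the standard first spherical harmonic $f$ satisfies $\nabla^2 f = -\frac{S}{n(n-1)} f\, g$, so with $\lambda$ determined by the soliton equation \eqref{rbsoliton} (here $\lambda = \frac{S}{n} - \frac{\Delta f}{n} + \rho S$ after tracing, reducing to an explicit function of $f$) the quantities in \eqref{sphere-cond} can be computed directly; the inequality there becomes an equality by the Bochner rigidity, so it holds in particular. The main obstacle I anticipate is the bookkeeping in step one: correctly handling the $\int_M g(\nabla\lambda,\nabla f)$ term and the scalar-curvature contributions so that the combination of Lemma~\ref{lem3}, Lemma~\ref{lem4}, and \eqref{div} collapses cleanly to the Bochner term with the right sign — the variable soliton function $\lambda$ and the extra $\rho S$ terms make this more delicate than in the classical almost Ricci soliton case, and one must be careful that no uncontrolled indefinite-sign term survives. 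A secondary subtlety is verifying that the round-sphere radius extracted from Obata's theorem is exactly $\frac{S}{n(n-1)}$ rather than some other normalization, which requires tracking the constant $c$ through the argument.
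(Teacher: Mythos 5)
Your overall architecture (reduce to the vanishing of the traceless Hessian, then run an Obata-type argument, then verify the converse on the round sphere by an explicit computation) matches the paper's, but the pivotal step as you have written it does not work, and the failure is one of sign, not of bookkeeping.

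The hypothesis \eqref{sphere-cond} is a \emph{lower} bound: $\int_M \operatorname{Ric}(\nabla f,\nabla f) \geq \frac{n-1}{n}\int_M(\Delta f)^2 \geq 0$. Lemma \ref{lem3} states
\[
\int_M\Bigl|\nabla^2 f - \tfrac{\Delta f}{n}g\Bigr|^{2}=\frac{(n-2)}{4n(1-2\rho(n-1))}\int_M \Bigl[\operatorname{Ric}(\nabla f, \nabla f)+ (n-1)\,g(\nabla\lambda,\nabla f)\Bigr],
\]
and under $\rho<\frac{1}{2(n-1)}$ the prefactor is \emph{positive}. Feeding a lower bound on $\int_M\operatorname{Ric}(\nabla f,\nabla f)$ into the right-hand side therefore produces only a lower bound on $\int_M|\nabla^2 f - \frac{\Delta f}{n}g|^2$ (modulo the uncontrolled term $\int_M g(\nabla\lambda,\nabla f)$, which, after integration by parts, is $-\int_M\lambda(n(\lambda+\rho S)-S)$ and has no definite sign). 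Since that quantity is automatically nonnegative, a lower bound yields nothing; you cannot conclude $\int_M|\nabla^2 f - \frac{\Delta f}{n}g|^2\leq 0$ from these ingredients. Notice also that $(\Delta f)^2$ never appears in Lemma \ref{lem3}, so there is no mechanism by which the right-hand side of \eqref{sphere-cond} can enter your argument with the correct sign.

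The missing ingredient is the integrated Bochner formula (equivalently, Yano's integral formula specialised to a gradient field), which the paper uses:
\[
\int_{M}\Bigl(\operatorname{Ric}(\nabla f,\nabla f)+\|\nabla^2 f\|^{2}-(\Delta f)^{2}\Bigr)=0 .
\]
Here $(\Delta f)^2$ occurs with a \emph{negative} sign, so writing $\|\nabla^2 f\|^2=|\nabla^2 f-\frac{\Delta f}{n}g|^2+\frac{1}{n}(\Delta f)^2$ gives
\[
\int_M\Bigl(\operatorname{Ric}(\nabla f,\nabla f)-\tfrac{n-1}{n}(\Delta f)^2\Bigr)+\int_M\Bigl|\nabla^2 f-\tfrac{\Delta f}{n}g\Bigr|^2=0,
\]
and now the hypothesis forces the first integral to be $\geq 0$ while the second is pointwise $\geq 0$; hence both vanish, $\operatorname{Ric}=\frac{S}{n}g$ (via \eqref{2.15}), Schur gives $S$ constant for $n>2$, and the Obata equation follows for $\mu=\frac{S}{n}-(\lambda+\rho S)$ (not for $f$ itself, a secondary slip in your sketch). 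The paper phrases this with $\|Q\|^2-\frac{S^2}{n}$ and Yano's formula for general $\xi$, but it is the same mechanism. Your converse sketch via the first spherical harmonic is essentially the paper's hypersurface computation and is fine in outline.
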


\begin{proof}
Let $(M^n, g, \xi, \lambda,\rho)$ be a compact nontrivial gradient almost RB-soliton that satisfies \eqref{sphere-cond}.
Then, the soliton function $\lambda$ is not a constant and $\varphi=0$. Consequently, \eqref{cdopf} reduces to
\begin{equation}\label{12}
    \nabla_{X_1} \xi=(\lambda +\rho S) X_1-Q X_1,
\end{equation}
which implies
\begin{equation}\label{13}
 \|\nabla \xi\|^{2}=n (\lambda +\rho S)^{2}+\|Q\|^{2}-2 (\lambda +\rho S) S .   
\end{equation}
From  \eqref{rbsoliton}, it  follows that
\begin{equation}\label{14}
    \frac{1}{4}\left|\pounds_{\xi}\right|^{2}=n (\lambda +\rho S)^{2}+\mid \operatorname{Ric}\left.\right|^{2}-2 (\lambda+\rho S) S.
\end{equation}
On a compact Riemannian manifold $(M, g)$, we have the following integral formula (cf. \cite{YANO}):
\[
\int_{M}\left(\operatorname{Ric}(\xi, \xi)+\frac{1}{2}\left|\pounds_{\xi}\right|^{2}-\|\nabla \xi\|^{2}-(\operatorname{div} \xi)^{2}\right)=0.
\]
Now making use of \eqref{12}, \eqref{13} and \eqref{14} in 
 From the above equation, we conclude
$$
\int_{M}\left(\left(\operatorname{Ric}(\xi, \xi)-\frac{n-1}{n}(n (\lambda +\rho S)-S)^{2}\right)+\left(\|Q\|^{2}-\frac{S^{2}}{n}\right)\right)=0 .
$$
Using the inequality \eqref{sphere-cond} and Schwarz's inequality in the above equation, we get the following equalities
\begin{equation}\label{15}
    Q=\frac{S}{n} I, \quad \operatorname{Ric}(\xi, \xi)=\frac{n-1}{n}(n (\lambda +\rho S)-S)^{2} .
\end{equation}
Since $n>2$, the first equation gives $S$ as a constant. Thus,  \eqref{12} implies
\begin{equation}\label{16}
    \nabla_{X_1} \xi=\left((\lambda +\rho S)-\frac{S}{n}\right) X_1.
\end{equation}
So the curvature tensor is
$$
R(X_1, X_2) \xi=X_1((\lambda+\rho S)) X_2-X_2(\lambda +\rho S) X_1,
$$
and consequently, the following expression for the Ricci tensor
holds:
$$
\operatorname{Ric}(X_2, \xi)=-(n-1) X_2(\lambda+\rho S) .
$$
That implies
$$
Q(\xi)=-(n-1) \nabla (\lambda +\rho S),
$$
which together with the first equation in \eqref{15}, we obtain 
$$
\frac{S}{n} \xi=-(n-1) \nabla \mu.
$$
Let $\mu=\frac{S}{n}-(\lambda+\rho S)$ and observe that $\mu$ is not a constant, this yields that $\mu$ is not a constant and that
$$
\frac{S}{n} \xi=(n-1) \nabla \mu,
$$
which in view of  \eqref{12} and \eqref{16}, gives
\begin{equation}\label{17}
    (n-1) \nabla_{X_1} \nabla \mu=-\frac{S}{n} \mu X_1.
\end{equation}
We have,  $\Delta \mu=$ $\operatorname{div}(\nabla \mu)$, which by using \eqref{17} is computed as
$$
\Delta \mu=-\frac{S}{(n-1)} \mu.
$$
Note that $\mu$ being not a constant, the above equation implies $\mu$ is an eigenfunction of the Laplace operator and consequently, $S>0$ as the manifold $(M^n, g)$
Under consideration is compact. Also, it is known that 
an eigenvalue of the Laplace operator on a compact manifold is discrete.  This implies that the scalar curvature of $M$, $S$ is constant.
Now observe that \eqref{17} is the Obata's differential equation, hence  $(M^n, g, \xi, \lambda,\rho)$ is isometric to the sphere $S^{n}(c)$, of curvature $c$, where $S=n(n-1) c,$ see \cite{obata}.

\smallskip

Conversely, let \(\mathbb{S}^{n}(c)\) be considered as a hypersurface in Euclidean space \(\mathbb{R}^{n+1}\) with the unit normal vector field \(N\). The shape operator \(A\) of \(\mathbb{S}^{n}(c)\) is given by  \(A = -\sqrt{c}I\), where \(I\) is the identity matrix. 
Let \(Z\) in \(\mathbb{R}^{n+1}\), let \(\xi\) be the tangential projection of \(Z\) onto \(\mathbb{S}^{n}(c)\).
 Define a smooth function \(\mu = \langle Z, N \rangle\). Then, \(Z = \xi + \mu N\). By differentiating covariantly concerning a vector field \(X_1\) on \(\mathbb{S}^{n}(c)\) and utilizing the Weingarten formula for hypersurfaces, we conclude.
\[
0=\nabla_{X_1} \xi-\sqrt{c} g(X_1, \xi) N+X_1(\mu) N+\sqrt{c} \mu\, X_1,
\]
where $g$ is the induced metric on the hypersurface $\mathbb{S}^{n}(c)$. Equating tangential and normal parts, we get
\begin{equation}\label{18}
\nabla_{X_1} \xi=-\sqrt{c} \mu X_1, \quad \nabla \mu=\sqrt{c} \,\xi.
\end{equation}
As
\[
\left(\pounds_{\xi} g\right)(X_1, X_2)=g\left(\nabla_{X_1} \xi, X_2\right)+g\left(\nabla_{X_2} \xi, X_1\right), \quad X_1, X_2 \in \mathfrak{X}(M),
\]
which in view of \eqref{18} yields
\begin{equation}\label{19}
    \pounds_{\xi} g=-2 \sqrt{c} \mu g .
\end{equation}
The Ricci tensor of the hypersurface $\mathbb{S}^{n}(c)$ is given by Ric $=(n-1) c g$ and therefore, using  \eqref{19}, we reach at
\begin{equation}\label{20}
    \frac{1}{2} \pounds_{\xi} g+\operatorname{Ric}=\lambda g +(n-1)\rho g,
\end{equation}
where $\lambda=((n-1)(c- \rho)-\sqrt{c} \mu)$. Observe that the function $\mu$ is not a constant, and consequently, by \eqref{20}, we get that $\left(\mathbb{S}^{n}(c), g, \xi, \lambda,\rho\right)$ is a gradient almost RB-soliton, where $\xi=\frac{1}{\sqrt{c}} \nabla \mu$. Moreover, \eqref{18} implies  
$\Delta \mu=-n c \mu$, which implies
\begin{equation}\label{21}
    \int_{\mathbb{S}^{n}(c)}\|\nabla \mu\|^{2}=n c \int_{\mathbb{S}^{n}(c)} \mu^{2}.
\end{equation}
Note that, $n (\lambda+ n(n-1\rho))-S=-n \sqrt{c} \mu$, and that
$$
\left(\frac{n-1}{n}\right)(n (\lambda+n(n-1)\rho)-S)^{2}=n(n-1) c \mu^{2}.
$$
Using the above equation in \eqref{21}, we conclude that
$$
\int_{\mathbb{S}^{n}(c)} \operatorname{Ric}(\nabla \mu, \nabla \mu)=\frac{n-1}{n} c \int_{\mathbb{S}^{n}(c)}(n (\lambda+n(n-1)\rho)-S)^{2} .
$$
Inserting $\nabla\mu=\sqrt{c} \xi$ in the above equation gives the required condition.
\end{proof}

The $\rho$-constraint in Theorem \ref{riclb} arises from:
\begin{itemize}
    \item The need to avoid divergence in integral identities (Lemma \ref{lem3}).
    \item Preservation of elliptic structure in the PDE $\Delta f = n(\lambda+\rho S)-S$.
    \item Comparison with the classical case $\rho=0$ in \cite{barros2012some}.
\end{itemize}

The following theorem utilises the Poisson equation to demonstrate that a gradient almost RB-soliton is isometric to a sphere.
\begin{theorem}\label{pos}
Let $(M^n,g,\xi,\lambda,\rho)$ be a compact nontrivial gradient almost RB-soliton with $\rho<\frac{3n-4}{2n(n-1)}$, $n>2$, and 
\begin{equation}\label{inequality1}
    \operatorname{Ric}(\xi,\xi)\leq (n-1)\|\xi\|^2,
\end{equation}
  Then $\lambda$ is a solution of the Poisson equation
  \begin{equation}\label{laplacianlambda}
      \Delta\sigma=S-n(\lambda+\rho S)
  \end{equation} 
  then $(M^n,g,\xi,\lambda,\rho)$ is isometric to unit sphere.
\end{theorem}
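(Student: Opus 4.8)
\textbf{Proof proposal for Theorem \ref{pos}.}

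The plan is to combine the integral identity of Lemma \ref{lem3} with the Bochner--type / Yano integral formula used in Theorem \ref{riclb}, and then squeeze the resulting expressions using the two hypotheses \eqref{inequality1} and the Poisson equation \eqref{laplacianlambda} to force $\lambda$ (or a suitable affine modification $\mu$ of it) to satisfy Obata's differential equation $\nabla^2\mu = -c\,\mu\, g$. First I would note that since the soliton is gradient, $\varphi = 0$, and \eqref{cdopf} reads $\nabla_{X}\xi = (\lambda+\rho S)X - QX$; nontriviality gives that $\lambda$ is nonconstant. The Poisson equation \eqref{laplacianlambda} together with \eqref{div} says $\Delta\sigma = -\operatorname{div}\xi$, so $\xi + \nabla\sigma$ is divergence-free; this is the analytic leverage we will use to rewrite $\int_M \operatorname{Ric}(\xi,\xi)$ and $\int_M\|\nabla\lambda\|^2$-type terms.

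Next I would integrate the Yano formula
\[
\int_M\Big(\operatorname{Ric}(\xi,\xi) + \tfrac12|\pounds_\xi g|^2 - \|\nabla\xi\|^2 - (\operatorname{div}\xi)^2\Big) = 0,
\]
substituting \eqref{12}, \eqref{13}, \eqref{14} exactly as in the proof of Theorem \ref{riclb} to obtain
\[
\int_M\Big(\operatorname{Ric}(\xi,\xi) - \tfrac{n-1}{n}(n(\lambda+\rho S)-S)^2 + \|Q\|^2 - \tfrac{S^2}{n}\Big) = 0.
\]
Then I would feed in the Poisson equation: by \eqref{laplacianlambda} the term $(n(\lambda+\rho S)-S)^2 = (\Delta\sigma)^2$, and integrating by parts $\int_M(\Delta\sigma)^2$ against the hypothesis $\operatorname{Ric}(\xi,\xi)\le (n-1)\|\xi\|^2$ — here one uses the standard inequality $\int_M(\Delta\sigma)^2 \ge \frac{n}{n-1}\int_M\big(|\nabla^2\sigma|^2 - \frac{(\Delta\sigma)^2}{n}\big) + \dots$, or more directly the refined Kato/Bochner inequality $\int_M (\Delta\sigma)^2 \ge \int_M |\nabla^2\sigma|^2 + \int_M\operatorname{Ric}(\nabla\sigma,\nabla\sigma)$. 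Comparing the two displayed integral identities and using $\rho < \frac{3n-4}{2n(n-1)}$ to guarantee the coefficient multiplying $\int_M|\operatorname{Ric}-\frac{S}{n}g|^2 = \int_M(\|Q\|^2 - \frac{S^2}{n})$ (via Lemma \ref{lem2}/Lemma \ref{lem3}) has the correct sign, all the inequalities must be equalities. This yields $Q = \frac{S}{n}I$ (so $S$ is constant since $n>2$), $\nabla^2\sigma = \frac{\Delta\sigma}{n}g$, and $\operatorname{Ric}(\xi,\xi) = (n-1)\|\xi\|^2$ along $\xi$.

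From $Q = \frac{S}{n}I$ and \eqref{12} one gets $\nabla_X\xi = \big((\lambda+\rho S) - \frac{S}{n}\big)X$, hence as in Theorem \ref{riclb} the Ricci tensor gives $\frac{S}{n}\xi = (n-1)\nabla\mu$ with $\mu := \frac{S}{n} - (\lambda+\rho S)$ nonconstant, and differentiating once more produces Obata's equation $(n-1)\nabla_X\nabla\mu = -\frac{S}{n}\mu X$. Since $\mu$ is a nonconstant eigenfunction of $\Delta$ with eigenvalue $\frac{S}{n-1} > 0$, the manifold is compact with $S$ constant, and by Obata's theorem \cite{obata} $(M^n,g)$ is isometric to a round sphere; tracking the normalisation (and rescaling so the eigenvalue is $n$) identifies it with the unit sphere $\mathbb{S}^n$. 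The main obstacle I anticipate is the bookkeeping in the middle step: getting the $\rho$-threshold $\frac{3n-4}{2n(n-1)}$ to come out exactly as the sign condition on the coefficient of $\int_M|\operatorname{Ric}-\frac{S}{n}g|^2$ after eliminating $\int_M\operatorname{Ric}(\nabla f,\nabla f)$ between the Yano identity and Lemma \ref{lem3}, while correctly accounting for the $(1-2\rho(n-1))$ factor and the boundary/divergence terms — it is easy to lose a factor of $2$ or a sign there, and the hypothesis \eqref{inequality1} must be invoked in precisely the right place so that the Poisson equation converts $\operatorname{Ric}(\xi,\xi)$ into something controllable by $\|\nabla\sigma\|^2$.
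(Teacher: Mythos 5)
Your overall strategy coincides with the paper's: use the Poisson equation to tie $\xi$ to $\nabla\lambda$, run a Bochner/Yano integral identity, convert $\int_M g(\nabla S,\xi)$ into $\int_M|\operatorname{Ric}-\frac{S}{n}g|^2$ via \eqref{10}, and close with Obata's theorem. (The paper applies Bochner to $\lambda$ directly; for a gradient field the Yano formula you quote reduces to exactly that, so this is not a real difference.) However, three steps in your write-up are genuine gaps rather than bookkeeping. First, you never actually conclude $\xi=-\nabla\lambda$. Observing that $\xi+\nabla\lambda$ is divergence-free is not enough by itself; the point is that $\xi=\nabla h$ is itself a gradient, so $\nabla(h+\lambda)$ is a divergence-free gradient on a compact manifold, hence $h+\lambda$ is harmonic and therefore constant (equivalently, the Poisson equation determines its solution up to an additive constant). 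Without $\xi=-\nabla\lambda$, the hypothesis \eqref{inequality1} cannot be matched against the $\operatorname{Ric}(\nabla\lambda,\nabla\lambda)$ and $\|\nabla\lambda\|^2$ terms that arise when you integrate $(\Delta\lambda)^2$ by parts.

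Second, the central computation is left undone, and the tool you cite for it is misstated: your ``refined Kato/Bochner inequality'' $\int_M(\Delta\sigma)^2\ge\int_M|\nabla^2\sigma|^2+\int_M\operatorname{Ric}(\nabla\sigma,\nabla\sigma)$ is in fact the Bochner \emph{equality}, and it is the equality that is needed. Carrying the integration by parts through (write $(\Delta\lambda)^2=(S-n(\lambda+\rho S))\Delta\lambda$, use $\int_M\lambda\Delta\lambda=-\int_M\|\nabla\lambda\|^2$ and $\int_M S\Delta\lambda=\int_M g(\xi,\nabla S)$, then \eqref{10}) yields
\[
\int_M\Bigl(\operatorname{Ric}(\xi,\xi)-(n-1)\|\xi\|^2\Bigr)-\frac{2n}{n-2}\Bigl(\frac{1}{2}-\rho(n-1)\Bigr)\int_M\Bigl|\operatorname{Ric}-\frac{S}{n}g\Bigr|^2=0,
\]
which is the identity the paper reaches; the sign condition you actually need is on $\frac12-\rho(n-1)$, so your worry that the threshold $\frac{3n-4}{2n(n-1)}$ might not ``come out exactly'' is well founded and cannot be waved away. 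Routing instead through \eqref{12a} of Lemma \ref{lem3} would introduce the extra factor $(1-2\rho(n-1))$ and is not what the paper does. Third, your endgame fails as stated: you arrive at Obata's equation with eigenvalue $\frac{S}{n-1}$ and then ``rescale so the eigenvalue is $n$,'' but the theorem asserts isometry to the \emph{unit} sphere and no rescaling is permitted. The normalization is forced by the equality case itself: combining $\operatorname{Ric}=\frac{S}{n}g$ with $\operatorname{Ric}(\xi,\xi)=(n-1)\|\xi\|^2$ gives $S=n(n-1)$, after which \eqref{cdopf} becomes $\nabla_{X}\nabla\lambda=\bigl((n-1)-(\lambda+\rho S)\bigr)X$, i.e.\ Obata's equation with constant $1$, and the conclusion follows without any rescaling.
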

 \begin{proof}
Suppose $(M^n, g, \xi, \lambda,\rho)$ is a compact almost RB-soliton with Ricci curvature satisfying \eqref{inequality1}
and {that the soliton function $\lambda$ is a solution of the Poisson equation $\Delta \sigma=S-n (\lambda +\rho S)$.}
Since $(M^n, g, \xi, \lambda,\rho)$ is a gradient almost RB-soliton, we have $\xi=\nabla h$ for a smooth function $h$. Using  \eqref{div}, we conclude
\[
-\Delta h=S-n (\lambda +\rho S) .
\]
Now, as the solution of the Poisson equation is unique up to a constant, we have {$-h=\lambda+c$ }for a constant $c$, and consequently, $\xi=-\nabla \lambda$ holds. Then, by using
 \eqref{cdopf}, we get
\[
A_{\lambda} X_1=\nabla_{X_1} \nabla \lambda=-\nabla_{X_1} \xi=-(\lambda +\rho S) X_1+Q X_1.
\]
Therefore,
\[
\left\|A_{\lambda}\right\|^{2}=n (\lambda +\rho S)^{2}+\|Q\|^{2}-2 (\lambda +\rho S) S,
\]
which, in view of \eqref{ricsbyn} implies
\begin{equation}\label{alambda}
    \left\|A_{\lambda}\right\|^{2}=\left|\operatorname{Ric}-\frac{S}{n} g\right|^{2}+\frac{1}{n}(S-n (\lambda +\rho S))^{2} .
\end{equation}
Now,  using Bochner's formula, we obtain
$$
\int_{M}\left(\operatorname{Ric}(\nabla \lambda, \nabla \lambda)+\left\|A_{\lambda}\right\|^{2}-(\Delta \lambda)^{2}\right)=0,
$$
which on using $\xi=-\nabla \lambda$, \eqref{laplacianlambda}, \eqref{alambda} 
gives
$$
\int_{M}\left(\operatorname{Ric}(\xi, \xi)+\left|\operatorname{Ric}-\frac{S}{n} g\right|^{2}+\frac{1}{n}(S-n (\lambda +\rho S)) \Delta \lambda-(S-n (\lambda +\rho S)) \Delta \lambda\right)=0 .
$$
After rearranging the above equation,
and availing the expression for $\Delta\lambda^2$, we infer
\[
\int_{M}\left(\operatorname{Ric}(\xi, \xi)-(n-1)\|\nabla \lambda\|^{2}+\left|\operatorname{Ric}-\frac{S}{n} g\right|^{2}-\left(\frac{n-1}{n}-\rho(n-1)\right) S \Delta \lambda\right)=0 .
\]
Next, employing the formula for  $\operatorname{div}(S \nabla \lambda)$ in the aforementioned equation, we conclude
\[
\int_{M}\left(\operatorname{Ric}(\xi, \xi)-(n-1)\|\xi\|^{2}+\left|\operatorname{Ric}-\frac{S}{n} g\right|^{2}-\left(\frac{n-1}{n}-\rho(n-1)\right)  g(\xi, \nabla S)\right)=0.
\]
 The above equation, given \eqref{10}, yields
\[
\int_{M}\left(\operatorname{Ric}(\xi, \xi)-(n-1)\|\xi\|^{2}-\frac{2n}{n-2}\left(\frac{n-1}{n}-\rho(n-1)-\frac{n-2}{2n}\right) \left|\operatorname{Ric}-\frac{S}{n} g\right|^{2}\right)=0.
\]
Using inequality \eqref{inequality1} and condition on $\rho$ in above equation, we obtain
\[
\operatorname{Ric}(\xi, \xi)=(n-1)\|\xi\|^{2} \quad \text { and } \quad \operatorname{Ric}=\frac{S}{n} g.
\]
These equations imply $S=n(n-1)$ and therefore  \eqref{cdopf} takes the form
\[
\nabla_{X} \nabla \lambda=((n-1)-(\lambda+\rho S)) X,
\]
Which gives the following Obata's differential equation:
\[
\nabla_{X} \nabla \bar{\lambda}=-\bar{\lambda} X,
\]
where $\bar{\lambda}=(n-1)-(\lambda+\rho S)$ is not a constant function owing to the fact that $(M^n, g, \xi, \lambda,\rho)$ is a nontrivial almost RB-soliton. Hence, $(M^n, g, \xi, \lambda,\rho)$ is isometric to the unit sphere $\mathbb{S}^{n}$, see \cite{obata}.
 \end{proof}
 
The next two results serve as applications of Ricci identity \eqref{10}, outlining certain conditions under which a compact gradient almost RB-soliton is isometric to a Euclidean sphere. 
 \begin{theorem}
Let $(M^n,g,\xi,\lambda,\rho)$ be a compact nontrivial gradient Ricci soliton, $n>2$. If the scalar curvature $S$ is a solution of the Poisson equation $$\Delta\sigma=S-n(\lambda+\rho S),$$ then $(M^n,g,\xi,\lambda,\rho)$ is isometric to a sphere. 
 \end{theorem}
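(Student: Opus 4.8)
The plan is to follow the template of Theorem \ref{pos} almost verbatim, exploiting the fact that for a gradient almost RB-soliton with scalar curvature $S$ solving the Poisson equation $\Delta\sigma = S - n(\lambda+\rho S)$, the uniqueness of solutions (up to an additive constant) of this elliptic equation forces $S$ and the potential to be related. First I would write $\xi = \nabla h$; then \eqref{div} gives $-\Delta h = S - n(\lambda+\rho S)$, so $h$ itself solves the same Poisson equation as $\sigma$. By the stated hypothesis $S$ also solves it, hence $S = -h + c$ for a constant $c$, which means $\xi = \nabla h = -\nabla S$. This is the key structural identity that replaces the role of $\xi = -\nabla\lambda$ in Theorem \ref{pos}.

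Next I would substitute $\xi = -\nabla S$ into the curvature machinery of Section \ref{cur}. From \eqref{cdopf} with $\varphi = 0$ (gradient case) we get the Hessian of $S$: $\nabla_{X}\nabla S = -\nabla_X\xi = -(\lambda+\rho S)X + QX$, so $A_S X = -(\lambda+\rho S)X + QX$ and consequently $\|A_S\|^2 = n(\lambda+\rho S)^2 + \|Q\|^2 - 2(\lambda+\rho S)S$, which via \eqref{ricsbyn} rewrites as $\|A_S\|^2 = |\operatorname{Ric} - \tfrac{S}{n}g|^2 + \tfrac{1}{n}(S - n(\lambda+\rho S))^2$. Then I would apply Bochner's formula to the function $S$:
\[
\int_M\Big(\operatorname{Ric}(\nabla S,\nabla S) + \|A_S\|^2 - (\Delta S)^2\Big) = 0,
\]
use $\nabla S = -\xi$, $\Delta S = S - n(\lambda+\rho S)$, and the expression for $\|A_S\|^2$ above, then integrate away the cross terms exactly as in Theorem \ref{pos} (using $\Delta S^2$ and $\operatorname{div}(S\nabla S)$ identities together with Lemma \ref{lem2}, i.e. \eqref{10}), arriving at an integral inequality that combines $\operatorname{Ric}(\xi,\xi) - (n-1)\|\xi\|^2$ with $|\operatorname{Ric} - \tfrac{S}{n}g|^2$. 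Since the theorem as stated for gradient Ricci solitons has $\rho$ appearing only through the Poisson equation, I expect the relevant sign condition to come out automatically (or I would note that the case $\rho=0$, a genuine Ricci soliton, is the intended reading, so $\frac{n-1}{n} - \frac{n-2}{2n} = \frac{n}{2n} > 0$). This pins down $\operatorname{Ric} = \tfrac{S}{n}g$ and $\operatorname{Ric}(\xi,\xi) = (n-1)\|\xi\|^2$, hence $S$ is constant, say $S = n(n-1)c$, and \eqref{cdopf} becomes $\nabla_X\nabla S = \big(\tfrac{S}{n} - (\lambda+\rho S)\big)X$; writing $\bar S = \tfrac{S}{n} - (\lambda+\rho S)$ (nonconstant by nontriviality, since otherwise $\lambda$ would be constant and the soliton trivial) gives Obata's equation $\nabla_X\nabla\bar S = -\tfrac{S}{n(n-1)}\bar S\,X$, so $M$ is isometric to a round sphere by \cite{obata}.

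The main obstacle I anticipate is bookkeeping rather than conceptual: one must check that the cross-term manipulations (the $\operatorname{div}(S\nabla S)$ and $\Delta S^2$ substitutions, and invoking \eqref{10} with $\xi = -\nabla S$) genuinely reduce the Bochner identity to a term of definite sign, and that the hypothesis "$S$ solves the Poisson equation" is strong enough — note this theorem drops the Ricci inequality \eqref{inequality1} that Theorem \ref{pos} assumed, so one needs the $\operatorname{Ric}(\xi,\xi) - (n-1)\|\xi\|^2$ contribution to be controlled purely by the remaining terms; this likely forces one to also extract, along the way, that $\operatorname{Ric}(\nabla S,\nabla S) \le (n-1)\|\nabla S\|^2$ or an equivalent, or else the statement implicitly requires $n > 2$ together with the constancy argument to close the loop. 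A secondary point to verify is that the nontriviality hypothesis indeed guarantees $\bar S$ is nonconstant: if $\bar S$ were constant, then $\operatorname{Ric} = \tfrac{S}{n}g$ plus constancy of $S$ would make the soliton function $\lambda$ constant, contradicting nontriviality, so this is routine but should be stated.
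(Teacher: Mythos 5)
Your opening step is right and matches the paper's intent: since $\xi=\nabla h$ and \eqref{div} gives $\Delta h = n(\lambda+\rho S)-S$, both $-h$ and $S$ solve the same Poisson equation, so on a compact manifold $S=-h+c$ and hence $\xi=-\nabla S$. But from there you take a detour into the Bochner machinery of Theorem \ref{pos}, and that route does not close. The Bochner argument in Theorem \ref{pos} terminates in an identity of the schematic form $\int_M\bigl(\operatorname{Ric}(\xi,\xi)-(n-1)\|\xi\|^2\bigr)=c\int_M\bigl|\operatorname{Ric}-\frac{S}{n}g\bigr|^2$ with $c>0$, and it is precisely the extra hypothesis \eqref{inequality1}, $\operatorname{Ric}(\xi,\xi)\le(n-1)\|\xi\|^2$, that makes the left side nonpositive and forces both sides to vanish. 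The present theorem does not assume \eqref{inequality1}, and nothing in your argument produces it: the term $\operatorname{Ric}(\xi,\xi)-(n-1)\|\xi\|^2$ has no definite sign in general (this is true for $\rho=0$ as well, so restricting to genuine Ricci solitons does not help). You flag this yourself as "the main obstacle", but the proof as proposed leaves it unresolved, so there is a genuine gap.

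The resolution is much shorter and bypasses Bochner entirely, which is what the paper does. Once you know $\xi=-\nabla S$, substitute directly into Lemma \ref{lem2}: identity \eqref{10} becomes
\begin{equation*}
\int_M\Bigl|\operatorname{Ric}-\frac{S}{n}\,g\Bigr|^{2}=\frac{n-2}{2n}\int_M g(\nabla S,\xi)=-\frac{n-2}{2n}\int_M\|\nabla S\|^{2}\le 0,
\end{equation*}
so for $n>2$ the left side forces $\operatorname{Ric}=\frac{S}{n}g$, and Schur's lemma gives $S$ constant, i.e.\ $(M,g)$ is Einstein. The paper then concludes by citing Corollary~1.4 of \cite{SDRBS} (a nontrivial compact gradient almost RB-soliton satisfying this Einstein condition is isometric to a Euclidean sphere), rather than rerunning the Obata argument by hand. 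So: keep your first paragraph, delete the Bochner computation, and replace it with the one-line application of \eqref{10} followed by the citation; as written, your proof needs a curvature hypothesis the theorem does not grant you.
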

 \begin{proof}
 By \eqref{10} and Corollary $1.4$ of \cite{SDRBS} theorem, it is straightforward.
\end{proof}
 \begin{corollary}
Let $(M^n,g,\xi,\lambda,\rho )$ be a compact gradient almost RB-soliton, for $n>2$. If any of the following conditions hold, then $M^n$ is isometric to a Euclidean sphere:
     \begin{itemize}
         \item[(i)] $\xi= -\nabla S $,
         \item[(ii)] $\operatorname{Ric}(\nabla f,\nabla f)+(n-1)g(\nabla \lambda,\nabla f)\leq 0$.
     \end{itemize}
 \end{corollary}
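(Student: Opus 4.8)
The plan is to reduce each of the two conditions to the hypothesis of Theorem~\ref{pos}, namely that $\lambda$ solves the Poisson equation \eqref{laplacianlambda}, $\Delta\sigma = S - n(\lambda+\rho S)$, together with the curvature inequality \eqref{inequality1}, $\operatorname{Ric}(\xi,\xi)\le (n-1)\|\xi\|^2$. Once both conditions are shown to force these hypotheses, the conclusion follows verbatim from Theorem~\ref{pos}. So the real content is to check, case by case, that (i) and (ii) each produce the Poisson equation for $\lambda$ and the Ricci bound.

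For case (i), assume $\xi = -\nabla S$. Then $\operatorname{div}\xi = -\Delta S$, while \eqref{div} gives $\operatorname{div}\xi = n(\lambda+\rho S) - S$; combining these, $-\Delta S = n(\lambda+\rho S)-S$, i.e. $\Delta S = S - n(\lambda+\rho S)$, so $\sigma = S$ solves the Poisson equation and $\lambda$ is (up to the usual additive constant) determined by it. For the curvature bound, I would feed $\xi=-\nabla S$ into the Ricci identity \eqref{11a} of Lemma~\ref{lem3}: since $\xi=\nabla f$ (gradient soliton) and also $\xi=-\nabla S$, the right-hand side $\frac{n-2}{2n}\int_M g(\nabla S,\nabla f) = -\frac{n-2}{2n}\int_M \|\nabla S\|^2 = -\frac{n-2}{2n}\int_M\|\xi\|^2 \le 0$, forcing $\int_M |\nabla^2 f - \frac{\Delta f}{n}g|^2 = 0$, hence $\operatorname{Ric} = \frac{S}{n}g$ directly (bypassing \eqref{inequality1} entirely), and from there the Obata argument in Theorem~\ref{pos} applies. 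The main obstacle here is bookkeeping: one must be careful that the additive constant ambiguity in the Poisson solution does not affect $\xi = -\nabla\lambda$, but this is exactly the argument already carried out at the start of the proof of Theorem~\ref{pos}.

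For case (ii), assume $\operatorname{Ric}(\nabla f,\nabla f) + (n-1) g(\nabla\lambda,\nabla f) \le 0$ pointwise, hence after integration. Plug this into the first identity \eqref{12a} of Lemma~\ref{lem3}: since $\rho < \tfrac{1}{2(n-1)}$ (which should be added as a standing hypothesis, or follows from $n>2$ in the intended reading) the factor $\tfrac{n-2}{4n(1-2\rho(n-1))}$ is positive, so $\int_M |\nabla^2 f - \tfrac{\Delta f}{n}g|^2 \le 0$, forcing $\nabla^2 f = \tfrac{\Delta f}{n}g$ and thus $\operatorname{Ric} = \tfrac{S}{n}g$ by \eqref{2.15}. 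Then, as in Theorem~\ref{pos}, $Q = \tfrac{S}{n}I$ with $n>2$ makes $S$ constant, \eqref{rorbs} reduces to $Q(\xi) = -(n-1)\nabla(\lambda+\rho S) = -(n-1)\nabla\lambda$, so $\tfrac{S}{n}\xi = -(n-1)\nabla\lambda$; taking divergence and using \eqref{div} gives a Poisson-type equation for $\lambda$, and $S>0$ follows because $\bar\lambda := $ (const)$-\lambda$ becomes a nonconstant Laplace eigenfunction on a compact manifold. The Obata equation $\nabla_X\nabla\bar\lambda = -\bar\lambda X$ then emerges exactly as in Theorem~\ref{pos}, giving the isometry with $\mathbb{S}^n$. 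The one delicate point is confirming nonconstancy of the relevant potential — i.e. that the soliton is genuinely nontrivial so the Obata eigenfunction does not collapse — which is guaranteed by the standing "nontrivial" hypothesis; I would state this explicitly to close the argument.
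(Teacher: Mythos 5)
Your core step in both cases coincides with the paper's own proof: the paper also applies the integral identity \eqref{10} (for (i)) and \eqref{12a} of Lemma~\ref{lem3} (for (ii)) to force $\operatorname{Ric}=\frac{S}{n}g$. Where you differ is the finish: the paper then simply cites Theorem~1.2 of \cite{SDRBS} to get the sphere, whereas you try to complete the Obata argument internally. Your two caveats for case (ii) are correct and genuinely absent from the corollary as stated: one does need $\rho<\frac{1}{2(n-1)}$ so that the prefactor $\frac{n-2}{4n(1-2\rho(n-1))}$ in \eqref{12a} is positive (this does \emph{not} follow from $n>2$, since $\rho$ is a free real parameter), and one does need nontriviality so that the Obata function is nonconstant. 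Granting those, your case (ii) finish is a sound replication of the Obata step already carried out in Theorems~\ref{riclb} and \ref{pos}.

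Case (i), however, has a genuine gap in your self-contained version. The identity \eqref{11a} with $\xi=\nabla f=-\nabla S$ forces \emph{both} sides to vanish, which yields not only $\operatorname{Ric}=\frac{S}{n}g$ but also $\int_M\|\nabla S\|^2=0$; hence $S$ is constant, $\xi\equiv 0$, and $\lambda+\rho S=\frac{S}{n}$ is constant. The soliton degenerates to a trivial Einstein metric, and there is no nonconstant function left to feed into Obata's theorem, so the assertion that ``the Obata argument in Theorem~\ref{pos} applies'' is precisely the step that fails: a Ricci-flat torus, or $\mathbb{CP}^m$ with Fubini--Study metric and $\xi=0$, satisfies (i) and is not a sphere. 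The paper avoids confronting this by outsourcing the conclusion to Theorem~1.2 of \cite{SDRBS}; a self-contained proof of (i) needs an extra hypothesis (e.g.\ $S>0$, or nontriviality, which as you note makes (i) vacuous). A smaller point: your opening framing of (i) as producing the Poisson hypothesis of Theorem~\ref{pos} is a red herring, since what $\xi=-\nabla S$ gives is that $S$, not $\lambda$, solves $\Delta\sigma=S-n(\lambda+\rho S)$ (the hypothesis of the unnumbered theorem preceding the corollary); fortunately you abandon that route in favour of \eqref{11a}.
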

 \begin{proof}
Since by \eqref{2.15},
\[
\operatorname{Ric} - \frac{S}{n}g=-\nabla^2 f + \frac{\Delta f}{n}g,
\]
If (i) or (ii) holds, then by \eqref{10} and Lemma \ref{lem3},  $\operatorname{Ric}=\frac{S}{n}g$.   Hence, from Theorem $1.2$ of \cite{SDRBS}, we confirm that $M^n$ is isometric to a Euclidean sphere.
 \end{proof}

It should be noted that (ii) above generalizes
Corollary $1$ in \cite{barros2012some}.

\section{Trivial Almost RB-solitons}\label{trival}
In this section, our primary objective is to establish the triviality of almost Ricci Bourguignon solitons under certain hypotheses. To achieve this, we employ various hypotheses as outlined in Theorems \ref{3.3}, \ref{thm4.3}, and \ref{thm4.4} (proved below), where we assume that certain functions remain constant along the integral curves of the potential vector field. In this section, we extend the theorems from almost Ricci solitons to almost RB-solitons, building upon the framework established in \cite{SDANARS}.

 \begin{theorem}\label{3.3}
     Let $(M^n,g,\xi,\lambda,\rho)$ be a compact almost RB-soliton, $n>2$, is a trivial RB-soliton if and only if the function $(n(\lambda + \rho S)-S)$ is a constant on the integral curves of the potential field $\xi$.
 \end{theorem}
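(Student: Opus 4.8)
The plan is to translate the whole statement into a fact about $\operatorname{div}\xi$. By \eqref{div} we have $\operatorname{div}\xi = n(\lambda+\rho S)-S$, so the hypothesis ``$n(\lambda+\rho S)-S$ is constant along the integral curves of $\xi$'' is precisely the pointwise equation $\xi(\operatorname{div}\xi)=0$.

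For the forward implication, suppose the soliton is trivial, i.e. $\xi$ is a Killing field, so $\pounds_\xi g\equiv 0$. Tracing \eqref{rbsoliton} (equivalently, using $\operatorname{div}\xi=\frac12\operatorname{tr}\pounds_\xi g$) gives $\operatorname{div}\xi\equiv 0$, hence $n(\lambda+\rho S)-S\equiv 0$ on $M$; in particular this function is constant along every integral curve of $\xi$.

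For the converse, assume $\xi(\operatorname{div}\xi)=0$. The key step is the pointwise identity $\operatorname{div}\big((\operatorname{div}\xi)\,\xi\big)=(\operatorname{div}\xi)^2+\xi(\operatorname{div}\xi)=(\operatorname{div}\xi)^2$; integrating over the compact manifold $M$ and applying the divergence theorem forces $\int_M(\operatorname{div}\xi)^2=0$, whence $\operatorname{div}\xi\equiv 0$, i.e. $\lambda+\rho S=\frac{S}{n}$ everywhere. Substituting $n(\lambda+\rho S)-S=0$ and $(\lambda+\rho S)S=\frac{S^2}{n}$ into the integral identity \eqref{e1} of Lemma~\ref{lem1} collapses it to $\int_M\big(\|Q\|^2-\frac{S^2}{n}\big)=0$. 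Since $S=\operatorname{tr}Q$ and $Q$ is symmetric, the Cauchy--Schwarz inequality gives $\|Q\|^2\ge\frac{S^2}{n}$ pointwise, with equality exactly when $Q=\frac{S}{n}I$; the vanishing of the integral of the nonnegative function $\|Q\|^2-\frac{S^2}{n}$ therefore forces $Q=\frac{S}{n}I$, that is, $\operatorname{Ric}=(\lambda+\rho S)g$. Feeding this back into \eqref{rbsoliton} yields $\frac12\pounds_\xi g=(\lambda+\rho S)g-\operatorname{Ric}=0$, so $\xi$ is Killing and the soliton is trivial; here $n>2$ lets Schur's lemma promote $\lambda+\rho S$, hence $S$ and $\lambda$, to constants.

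The only genuinely non-formal point is the passage from the pointwise vanishing $\xi(\operatorname{div}\xi)=0$ to the global vanishing $\operatorname{div}\xi\equiv 0$, which is exactly where compactness enters, via the divergence theorem applied to $(\operatorname{div}\xi)\xi$; everything afterward is the standard Cauchy--Schwarz rigidity that characterises Einstein-type metrics. I expect this to be the cleanest route, in particular avoiding the Yano--Bochner integral formula used in Theorem~\ref{riclb} (which would also work but requires more bookkeeping with the skew part $\varphi$ and Lemma~\ref{lem5}).
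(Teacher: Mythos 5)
Your proposal is correct and follows essentially the same route as the paper: the identity $\operatorname{div}\big((n(\lambda+\rho S)-S)\xi\big)=(n(\lambda+\rho S)-S)^2$ integrated over the compact manifold, then Lemma~\ref{lem1} with the Cauchy--Schwarz rigidity $\|Q\|^2\ge S^2/n$ to force $Q=\frac{S}{n}I$, and Schur's lemma for $n>2$. The only difference is cosmetic (you phrase everything via $\operatorname{div}\xi$) and you also write out the forward direction, which the paper dismisses as ``easy.''
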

 \begin{proof}
First, we let the function $n(\lambda + \rho S)-S$, a constant on the integral curves of the potential field $\xi$. Then observe that
$$\operatorname{div}((n(\lambda+\rho S)-S)\xi)=(n(\lambda+\rho S)-S)^2.$$ 
Integrating this, we obtain
$(\lambda+\rho S)=\frac{1}{n}S$. Now put this value of $(\lambda+\rho S)$ in (\ref{e1}), we get
\[
\int_M\left(\frac{1}{n}S^2-\|Q\|^2\right)=0.
\]
This implies,
\[Q=\frac{1}{n}SI,\]
 and using Schwarz inequality for $n>2$, we conclude that $S$ is a constant, so we get that $\lambda$ is a constant. Hence, in view of  \eqref{rbsoliton}, $\xi$ is a killing vector field, that is, $(M^n,g,\xi,\lambda,\rho)$ is a trivial RB-soliton. The converse is easy.
 \end{proof}

\begin{remark}
Theorem~4.1 follows immediately from Item~2 of Proposition~1 in \cite{barros2013note} together with the maximum principle.
\end{remark}

Next, we shall demonstrate a straightforward application of the preceding theorem.
 \begin{corollary}\label{trc}
     Let $(M^n,g,\xi,\lambda,\rho)$ be a compact almost RB-soliton, $n>2$. Then
    $$S\leq n(\lambda+\rho S),$$
     if and only if $(M^n,g,\xi,\lambda,\rho)$ is a trivial RB-soliton.
 \end{corollary}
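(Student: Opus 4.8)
\textbf{Proof proposal for Corollary \ref{trc}.}

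The plan is to derive this corollary as a direct consequence of Theorem \ref{3.3} via the maximum principle. First I would assume the inequality $S \leq n(\lambda + \rho S)$ holds on $M$, i.e. $n(\lambda+\rho S) - S \geq 0$ everywhere. The key observation is to integrate the divergence identity
\[
\operatorname{div}\big((n(\lambda+\rho S)-S)\,\xi\big) = \xi\big(n(\lambda+\rho S)-S\big) + (n(\lambda+\rho S)-S)\operatorname{div}\xi,
\]
and invoke \eqref{div}, which gives $\operatorname{div}\xi = n(\lambda+\rho S) - S$. Hence the second term on the right is exactly $(n(\lambda+\rho S)-S)^2 \geq 0$. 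Integrating over the compact manifold $M$, the left side vanishes by the divergence theorem, so
\[
\int_M \xi\big(n(\lambda+\rho S)-S\big) = -\int_M (n(\lambda+\rho S)-S)^2 \leq 0.
\]
On the other hand, since $n(\lambda+\rho S)-S \geq 0$, one can also write $\operatorname{div}\big((n(\lambda+\rho S)-S)\xi\big)$ in a form that forces a sign on $\int_M \xi(n(\lambda+\rho S)-S)$; combining the two sign conditions should pin down $n(\lambda+\rho S) - S \equiv 0$, i.e. the function $n(\lambda+\rho S)-S$ is constant (namely zero) along the integral curves of $\xi$.

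Once that constancy is established, Theorem \ref{3.3} applies verbatim and yields that $(M^n,g,\xi,\lambda,\rho)$ is a trivial RB-soliton. For the converse direction, if the soliton is trivial then $\xi$ is Killing and $\lambda$ is constant; moreover triviality forces $\operatorname{div}\xi = 0$, so \eqref{div} gives $n(\lambda+\rho S) - S = 0$ identically, which certainly satisfies $S \leq n(\lambda+\rho S)$. Thus both implications follow.

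The main obstacle I anticipate is making the sign argument airtight: from $\int_M (n(\lambda+\rho S)-S)^2 \leq 0$ we immediately get $n(\lambda+\rho S)-S \equiv 0$ provided we can show $\int_M \xi(n(\lambda+\rho S)-S) \geq 0$, but a priori that integral is just $-\int_M(n(\lambda+\rho S)-S)^2$, which is $\leq 0$. The cleanest route is to recognize that since $n(\lambda+\rho S)-S$ is a nonnegative function and $\int_M \operatorname{div}((n(\lambda+\rho S)-S)\xi) = 0$, the relation $\int_M (n(\lambda+\rho S)-S)^2 = -\int_M \xi(n(\lambda+\rho S)-S) = \int_M (n(\lambda+\rho S)-S)\,\operatorname{div}\xi \cdot(\text{correction})$ must be handled carefully — in fact, the slick way is to set $u = n(\lambda+\rho S)-S \geq 0$ and note $\operatorname{div}(u\xi) = \xi(u) + u^2$, so $\int_M \xi(u) = -\int_M u^2$; but also $\int_M \xi(u) = \int_M \operatorname{div}(u\xi) - \int_M u\operatorname{div}\xi = 0 - \int_M u^2$, which is consistent and does not yet give the conclusion. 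The actual resolution is that $u \geq 0$ together with $\int_M u \cdot u = \int_M u \operatorname{div}\xi$ and applying the divergence theorem to $u\xi$ forces $\int_M u^2 = 0$ only after noting $\int_M \operatorname{div}(u\xi)=0$ splits as $\int_M \xi(u) + \int_M u^2 = 0$ and $\int_M \xi(u) = \int_M \operatorname{div}(u\xi) - \int u \operatorname{div}\xi$. I would therefore instead integrate $\operatorname{div}(u\xi)$ directly and use that the only consistent possibility compatible with $u\geq 0$ and the resulting identity $\int_M u^2 \leq 0$ is $u\equiv 0$; spelling this out precisely is the one delicate point, after which the rest is immediate from Theorem \ref{3.3}.
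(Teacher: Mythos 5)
There is a genuine gap in the forward direction, and you have in fact identified it yourself without resolving it. Setting $u = n(\lambda+\rho S)-S$, the identity $\operatorname{div}(u\xi)=\xi(u)+u\operatorname{div}\xi=\xi(u)+u^2$ integrates to $\int_M \xi(u)=-\int_M u^2$, which is a tautology: it carries no information about the sign of $\int_M u^2$ unless you independently know the sign of $\int_M \xi(u)$, and the hypothesis $u\geq 0$ gives you no control over $\xi(u)$. Every reformulation you attempt ("$\int_M \xi(u)=\int_M\operatorname{div}(u\xi)-\int_M u\operatorname{div}\xi$", etc.) circles back to the same identity, so the claimed conclusion $\int_M u^2\leq 0$ is never actually derived. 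The $\operatorname{div}(u\xi)$ identity is the right tool for Theorem \ref{3.3}, where the hypothesis is $\xi(u)=0$; it is the wrong tool here.

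The correct step is much simpler and is what the paper does: integrate equation \eqref{div} itself. Since $\operatorname{div}\xi = n(\lambda+\rho S)-S = u$ and $M$ is compact, the divergence theorem gives $\int_M u = \int_M \operatorname{div}\xi = 0$; combined with $u\geq 0$ pointwise this forces $u\equiv 0$, i.e.\ $S=n(\lambda+\rho S)$ everywhere. From there your plan goes through: $u\equiv 0$ is trivially constant along the integral curves of $\xi$, so Theorem \ref{3.3} (or a repetition of its proof, substituting $\lambda+\rho S=\tfrac{S}{n}$ into \eqref{e1} to get $Q=\tfrac{S}{n}I$ and hence, for $n>2$, constant $S$ and constant $\lambda$) yields triviality. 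Your converse direction is fine.
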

 \begin{proof}
First, suppose $S\leq n(\lambda+\rho S)$. Then integrating \eqref{div} gives $S=n(\lambda+\rho S)$. Now, following the proof of Theorem \ref{3.3}, we obtain the result. The converse follows easily.
\end{proof}
\noindent

Further, we study when a compact almost RB-soliton turns into an RB-soliton, thereby recapturing Theorem $3.3$ 
of \cite{SDANARS}.
\begin{theorem}\label{thm4.2}
     Let $(M^n,g,\xi,\lambda,\rho)$ be a compact almost RB-soliton, $n>2$. If 
     \begin{equation}\label{e2}
         Q(\xi)=\left(\frac{1}{2}-(n-1)\rho\right)\nabla S,
     \end{equation}
     holds, then $(M^n,g,\xi,\lambda,\rho)$ is  a RB-soliton.
 \end{theorem}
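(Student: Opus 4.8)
\textbf{Proof proposal for Theorem \ref{thm4.2}.}

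The plan is to start from the hypothesis \eqref{e2} and combine it with the intrinsic expression \eqref{rorbs} for $Q(\xi)$ that was derived in Section \ref{cur}. Subtracting the two expressions for $Q(\xi)$ should cancel the $\frac{1}{2}\nabla S$ term and leave an identity of the form $(n-1)\nabla(\lambda+\rho S) + \operatorname{div}\varphi = (n-1)\rho\,\nabla S + (\text{lower order})$; after regrouping the $\rho S$ contributions one expects to obtain $(n-1)\nabla\lambda = -\operatorname{div}\varphi$, i.e. the gradient of the soliton function is controlled purely by the skew part $\varphi$ of $\nabla\xi$. First I would write this out carefully, keeping track of the coefficients, so that the scalar-curvature terms genuinely drop out and only $\nabla\lambda$ and $\operatorname{div}\varphi$ survive.

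Next I would pair this identity with the potential field $\xi$ and integrate over the compact manifold $M$. The term $\int_M g(\operatorname{div}\varphi,\xi)$ can be handled by the integration-by-parts computation already performed in the proof of Lemma \ref{lem5}, which expresses $\operatorname{div}(\varphi(\xi))$ in terms of $\|\varphi\|^2$, $\operatorname{Ric}(\xi,\xi)$ and a divergence; alternatively, since $\varphi$ is skew-symmetric, $\int_M g(\operatorname{div}\varphi, \nabla\lambda)$ vanishes after another integration by parts because $\nabla^2\lambda$ is symmetric. The cleanest route is: pair the identity $(n-1)\nabla\lambda = -\operatorname{div}\varphi$ with $\nabla\lambda$ itself, integrate, and use that $\int_M g(\operatorname{div}\varphi,\nabla\lambda) = -\int_M g(\varphi,\nabla^2\lambda) = 0$ by skew-symmetry of $\varphi$ against the symmetric Hessian $A_\lambda$ (cf. \eqref{sks2}). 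This forces $\int_M (n-1)\|\nabla\lambda\|^2 = 0$, hence $\nabla\lambda \equiv 0$, so $\lambda$ is constant; by Definition \ref{def:almostRB} the tuple is then a genuine RB-soliton.

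The step I expect to be the main obstacle is the bookkeeping in the first paragraph: one must be sure that \emph{all} the scalar-curvature terms, including the $\rho S$ inside $\lambda + \rho S$ appearing in \eqref{rorbs}, combine exactly with the $\left(\tfrac12 - (n-1)\rho\right)\nabla S$ on the right of \eqref{e2} so that nothing but $\nabla\lambda$ and $\operatorname{div}\varphi$ is left; a wrong coefficient here would leave a stray $\nabla S$ term that cannot be killed by integration against $\nabla\lambda$ alone, and one would instead need the full Lemma \ref{lem5} machinery together with the sign hypothesis on $\operatorname{Ric}(\xi,\xi)$. Assuming the cancellation works as expected, the rest is a routine compactness-plus-integration argument, and the conclusion $\lambda = \text{const}$, hence triviality of the almost structure, follows immediately. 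The converse direction is trivial since a constant $\lambda$ makes $(M^n,g,\xi,\lambda,\rho)$ an RB-soliton by definition, and \eqref{e2} then reduces to \eqref{rorbs} with $\varphi$ possibly nonzero — so strictly only the stated implication needs proof.
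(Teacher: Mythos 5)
Your proposal is correct, and the cancellation you were worried about does work out exactly: equating the hypothesis \eqref{e2} with the structural identity \eqref{rorbs} gives
\[
-(n-1)\nabla\lambda-(n-1)\rho\,\nabla S+\tfrac{1}{2}\nabla S-\operatorname{div}\varphi=\left(\tfrac{1}{2}-(n-1)\rho\right)\nabla S,
\]
so every $\nabla S$ term drops and one is left with the pointwise relation $(n-1)\nabla\lambda=-\operatorname{div}\varphi$. Your integration by parts is also sound: $\int_M g(\operatorname{div}\varphi,\nabla\lambda)=-\int_M\langle\varphi,A_\lambda\rangle=0$ since $\varphi$ is skew and the Hessian $A_\lambda$ is symmetric, whence $\int_M(n-1)\|\nabla\lambda\|^2=0$ and $\lambda$ is constant. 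This is, however, a genuinely different route from the paper's. The paper instead starts from the integral identity of Lemma \ref{lem4}, substitutes $\operatorname{Ric}(\nabla\lambda,\xi)=g(Q\xi,\nabla\lambda)=\left(\tfrac{1}{2}-(n-1)\rho\right)g(\nabla S,\nabla\lambda)$ using \eqref{e2}, and then uses $\operatorname{div}(S\nabla\lambda)$ to cancel the $S\Delta\lambda$ terms against the $g(\nabla S,\nabla\lambda)$ terms, arriving at the same conclusion $\int_M\|\nabla\lambda\|^2=0$. Your argument is more elementary and arguably more illuminating: it shows that \eqref{e2} is pointwise equivalent to $(n-1)\nabla\lambda=-\operatorname{div}\varphi$, so the only analytic input needed is one divergence-theorem application, whereas the paper's route leans on the machinery of Lemma \ref{lem4} (which in turn invokes several divergence computations). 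One small remark: the theorem as stated is a one-way implication, so your closing comment about the converse is unnecessary but harmless; also, your fallback worry about needing Lemma \ref{lem5} and a sign hypothesis on $\operatorname{Ric}(\xi,\xi)$ does not materialize, since no stray $\nabla S$ term survives.
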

 \begin{proof}
    For a given  compact almost RB-soliton, we use Lemma \ref{lem4}  to obtain,
    $$
    \int_M\frac{1}{2}g(\nabla\lambda,\nabla S)+(n-1)\|\lambda\|^2+\frac{1}{2}S\Delta\lambda-(n-1)\rho S\Delta\lambda=0.
    $$
Then, using the formula for $\operatorname{div}(S\nabla\lambda)$ and (\ref{e2}), we affirm
     $$\int_M\|\nabla\lambda\|^2=0.$$
    Hence, $\lambda$ is a constant, and consequently, $(M^n,g,\xi,\lambda,\rho)$ is a RB-soliton.
 \end{proof}

 Next, we depict another situation in which an 
 Almost the RB-soliton becomes trivial. 
 
\begin{theorem}\label{thm4.3}
     Let $(M^n,g,\xi,\lambda,\rho)$ be a compact almost RB-soliton, $n>2$, with $\operatorname{Ric}(\xi,\xi)\geq\|\varphi\|^2$. Then the function $((n-1)(\lambda+\rho S)-S)$ is a constant on the integral curves of potential field $\xi$, if and only if $(M^n,g,\xi,\lambda,\rho)$ is a trivial RB-soliton.
 \end{theorem}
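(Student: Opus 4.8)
The plan is to prove the two implications separately, with the forward direction (constancy along integral curves $\Rightarrow$ triviality) being the substantive part and the converse being immediate. For the forward direction, suppose $((n-1)(\lambda+\rho S)-S)$ is constant along the integral curves of $\xi$, i.e.\ $\xi\big((n-1)(\lambda+\rho S)-S\big)=0$. The natural first step is to feed this hypothesis into Lemma~\ref{lem5}, which states
\[
\int_M\left(\operatorname{Ric}(\xi,\xi)-\|\varphi\|^2+\xi\big((n-1)(\lambda+\rho S)-\tfrac{1}{2}S\big)\right)=0.
\]
The term $\xi\big((n-1)(\lambda+\rho S)-\tfrac12 S\big)$ should be rewritten as $\xi\big((n-1)(\lambda+\rho S)-S\big)+\tfrac12\xi(S)$; by hypothesis the first piece vanishes pointwise, so the integral reduces to $\int_M\big(\operatorname{Ric}(\xi,\xi)-\|\varphi\|^2+\tfrac12\xi(S)\big)=0$. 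I would then try to kill the $\tfrac12\int_M\xi(S)$ term: using $\operatorname{div}(S\xi)=\xi(S)+S\operatorname{div}\xi$ together with \eqref{div}, one gets $\int_M\xi(S)=-\int_M S(n(\lambda+\rho S)-S)$, and this will have to be controlled or shown to vanish using the constancy hypothesis again — possibly it cancels against another divergence term, or one combines it with the identity $\operatorname{div}\big(((n-1)(\lambda+\rho S)-S)\xi\big)=((n-1)(\lambda+\rho S)-S)\operatorname{div}\xi$ (whose integral is zero) to pin down a relation among $S$, $\lambda$, and $\operatorname{div}\xi$.

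Once the $\xi(S)$ contribution is handled, the reduced identity reads $\int_M\big(\operatorname{Ric}(\xi,\xi)-\|\varphi\|^2\big)=0$ (or this plus a sign-definite remainder). Combining this with the standing hypothesis $\operatorname{Ric}(\xi,\xi)\geq\|\varphi\|^2$ forces $\operatorname{Ric}(\xi,\xi)=\|\varphi\|^2$ pointwise on $M$. At this stage I expect to use the curvature identity \eqref{rorbs}, namely $Q\xi=-(n-1)\nabla(\lambda+\rho S)+\tfrac12\nabla S-\operatorname{div}\varphi$, paired with the divergence computation $\operatorname{div}\varphi(\xi)=-\|\varphi\|^2-\operatorname{Ric}(\xi,\xi)+\xi\big((n-1)(\lambda+\rho S)-\tfrac12 S\big)$ from the proof of Lemma~\ref{lem5}: the equality case $\operatorname{Ric}(\xi,\xi)=\|\varphi\|^2$ combined with the constancy of $((n-1)(\lambda+\rho S)-S)$ should show $\operatorname{div}\varphi(\xi)=-2\|\varphi\|^2+\tfrac12\xi(S)$, and then integrating and using $\int_M\operatorname{div}(\text{anything})=0$ ought to yield $\int_M\|\varphi\|^2=0$, hence $\varphi\equiv 0$. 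So the soliton is a gradient soliton. From $\varphi=0$, the term $\operatorname{Ric}(\xi,\xi)=\|\varphi\|^2=0$ gives $Q\xi=0$, and \eqref{cdopf} becomes $\nabla_X\xi=(\lambda+\rho S)X-QX$; plugging into Lemma~\ref{lem1} or mimicking the argument in Theorem~\ref{3.3} (where one derives $Q=\tfrac{S}{n}I$ from an integral inequality, hence $S$ constant since $n>2$, hence $\lambda$ constant, hence $\xi$ Killing) finishes the proof that the soliton is trivial.

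For the converse: if $(M^n,g,\xi,\lambda,\rho)$ is trivial, then $\xi$ is Killing, $\lambda$ is constant, and $Q$ is determined by $\operatorname{Ric}=(\lambda+\rho S)g-\tfrac12\pounds_\xi g=(\lambda+\rho S)g$ since $\pounds_\xi g=0$; tracing gives $S=n(\lambda+\rho S)$, so $S$ is constant (as $1-n\rho\neq 0$ generically, or directly because $\lambda$ constant forces $S$ constant via $S(1-n\rho)=n\lambda$), and therefore $((n-1)(\lambda+\rho S)-S)$ is a constant function on $M$, in particular constant along every integral curve of $\xi$. This direction is a one-line check.

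\textbf{Main obstacle.} The delicate point is controlling the $\xi(S)$ term (equivalently the $\int_M S(n(\lambda+\rho S)-S)$ term) after applying Lemma~\ref{lem5}: the constancy hypothesis is about $(n-1)(\lambda+\rho S)-S$, not about $n(\lambda+\rho S)-S$, so the two differ by $(\lambda+\rho S)$, and I will need an auxiliary divergence identity — most likely $\operatorname{div}\big(((n-1)(\lambda+\rho S)-S)\xi\big)$ integrated to zero — to eliminate it cleanly and reduce to the sign-definite comparison $\operatorname{Ric}(\xi,\xi)\geq\|\varphi\|^2$. If that bookkeeping does not close on its own, one falls back on combining Lemma~\ref{lem5} with Lemma~\ref{lem1} and \eqref{div} simultaneously to get a system forcing $\varphi=0$ and $S$ constant.
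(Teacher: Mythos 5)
Your opening move matches the paper's: apply Lemma~\ref{lem5} and rewrite $\xi\bigl((n-1)(\lambda+\rho S)-\tfrac12 S\bigr)=\tfrac12\xi(S)$ using the hypothesis (your sign here is the correct one), reducing to $\int_M\bigl(\operatorname{Ric}(\xi,\xi)-\|\varphi\|^2+\tfrac12\xi(S)\bigr)=0$. But the step you flag as the ``main obstacle'' is exactly where your plan does not close, and the missing idea is Lemma~\ref{lem2}: identity \eqref{10} gives $\int_M\xi(S)=\int_M g(\nabla S,\xi)=\tfrac{2n}{n-2}\int_M\bigl|\operatorname{Ric}-\tfrac{S}{n}g\bigr|^2\geq 0$. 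One does not ``kill'' the $\xi(S)$ term; one recognizes it as manifestly nonnegative, so the identity becomes a sum of two nonnegative quantities equal to zero, forcing $\operatorname{Ric}(\xi,\xi)=\|\varphi\|^2$ \emph{and} $\operatorname{Ric}=\tfrac{S}{n}g$ pointwise, whence $S$ is constant by Schur's lemma since $n>2$. Your proposed substitutes fail: $\int_M\xi(S)=-\int_M S\,(n(\lambda+\rho S)-S)$ has no definite sign, and integrating $\operatorname{div}\bigl(((n-1)(\lambda+\rho S)-S)\xi\bigr)$ only yields $\int_M((n-1)(\lambda+\rho S)-S)(n(\lambda+\rho S)-S)=0$, which determines neither $\int_M\xi(S)$ nor anything sign-definite.

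Two further points. First, your attempt to conclude $\varphi\equiv 0$ by re-integrating $\operatorname{div}(\varphi\xi)$ is circular --- that computation is precisely the proof of Lemma~\ref{lem5}, so it returns the identity you started from; the argument neither needs nor obtains $\varphi=0$, only the equality $\operatorname{Ric}(\xi,\xi)=\|\varphi\|^2$. Second, the endgame is not Lemma~\ref{lem1} or a rerun of Theorem~\ref{3.3}: once $S$ is constant, the hypothesis $\xi\bigl((n-1)(\lambda+\rho S)-S\bigr)=0$ collapses to $\xi(\lambda)=0$, and then Lemma~\ref{lem4} (with $\operatorname{Ric}=\tfrac{S}{n}g$ and $\Delta\lambda$ integrating to zero against the constant $S$) gives $\int_M\|\nabla\lambda\|^2=0$, so $\lambda$ is constant and $\xi$ is Killing by \eqref{rbsoliton}. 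Your converse direction is fine.
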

\begin{proof}
  For a given compact almost RB-soliton, we have the hypothesis that the $((n-1)(\lambda+\rho S)-S)$ is a constant on the integral curves of the potential field $\xi$.
  This yields that
  \[
\xi\left((n-1)(\lambda+\rho S)-\frac{1}{2}S\right)=-\frac{1}{2}\xi(S).
  \]
    Now, using Lemma \ref{lem5}, we have
    \[
    \int_M\left(\operatorname{Ric}(\xi,\xi)-\|\varphi\|^2-\frac{1}{2}\xi(S)\right)=0.
    \]
Then using formula for $\operatorname{div}(S \xi)$ and 
integral identity (\ref{10}), we obtain

    \[\operatorname{Ric}(\xi,\xi)=|\varphi\|^2\text{ and } \operatorname{Ric}= \frac{S}{n}g,\]
    for $n>2$, $S$ is constant. Moreover, the condition $\xi\left((n-1)(\lambda+\rho S)\right)=0$ implies $\xi(\lambda)=0$. As a result, from Lemma \ref{lem4}, we get 
    \[\int_M\|\nabla\lambda\|^2=0\]
    proving that $\lambda$ is constant. Hence, $(M^n,g,\xi,\lambda,\rho)$ is a trivial RB-soliton. The converse can be verified without any difficulty.
\end{proof}

Finally, we see that the behaviour of the scalar curvature 
determines whether an almost RB-soliton is trivial or a
sphere.

\begin{theorem}\label{thm4.4}
A compact, almost RB-soliton is trivial if either:
\begin{enumerate}
    \item $n(\lambda+\rho S)-S$ is $\xi$-constant, or
    \item $\rho = \frac{1}{2(n-1)}$ with $\operatorname{Ric}(\xi,\xi) \geq \|\varphi\|^2$.
\end{enumerate}
The second case highlights the critical $\rho$ in Remark \ref{rho-remark}.
\end{theorem}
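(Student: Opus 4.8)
\textbf{Proof proposal for Theorem \ref{thm4.4}.}

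The plan is to show that each of the two hypotheses is, in fact, one of the triviality criteria already proved earlier in the paper, so the theorem is essentially a repackaging of Theorem \ref{3.3} and Theorem \ref{thm4.3} into a single statement that isolates the role of the critical parameter $\rho = \frac{1}{2(n-1)}$. First I would dispose of case (1): by Theorem \ref{3.3}, if $n(\lambda+\rho S)-S$ is constant along the integral curves of $\xi$, then $(M^n,g,\xi,\lambda,\rho)$ is a trivial RB-soliton, and there is nothing further to prove. So the substance of the statement is case (2).

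For case (2), the idea is to set $\rho = \frac{1}{2(n-1)}$ and observe what this value does to the function $(n-1)(\lambda+\rho S)-S$ appearing in Lemma \ref{lem5} and in Theorem \ref{thm4.3}. With this choice, $(n-1)\rho S = \tfrac12 S$, so that $(n-1)(\lambda+\rho S)-S = (n-1)\lambda + \tfrac12 S - S = (n-1)\lambda - \tfrac12 S$, and hence $(n-1)(\lambda+\rho S) - \tfrac12 S = (n-1)\lambda$. Then I would run the argument of Theorem \ref{thm4.3}: starting from Lemma \ref{lem5}, namely
\[
\int_M\left(\operatorname{Ric}(\xi,\xi)-\|\varphi\|^2+\xi\Big((n-1)(\lambda+\rho S)-\tfrac12 S\Big)\right)=0,
\]
the last term becomes $(n-1)\,\xi(\lambda)$, and since $\int_M \xi(\lambda) = \int_M \operatorname{div}((\lambda)\xi) - \int_M \lambda\operatorname{div}\xi$ need not vanish directly, I would instead re-examine whether the hypothesis forces $\xi(\lambda)$ to integrate against the right weight; more cleanly, I would simply invoke Theorem \ref{thm4.3} with the additional input that, at the critical $\rho$, the hypothesis on $((n-1)(\lambda+\rho S)-S)$ being $\xi$-constant is automatically tied to $\xi(\lambda)$ being controlled. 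Using $\operatorname{Ric}(\xi,\xi)\geq\|\varphi\|^2$ together with integral identity \eqref{10.3} (which gives $\int_M |\operatorname{Ric}-\tfrac{S}{n}g|^2 = \tfrac{n-2}{2n}\int_M g(\nabla S,\xi)$ and, combined with the formula for $\operatorname{div}(S\xi)$, pins down $\operatorname{Ric}=\tfrac{S}{n}g$ and $S$ constant for $n>2$), one concludes $\operatorname{Ric}=\frac{S}{n}g$ with $S$ constant, then $\xi(\lambda)=0$, and finally Lemma \ref{lem4} forces $\int_M\|\nabla\lambda\|^2=0$, so $\lambda$ is constant and $\xi$ is Killing by \eqref{rbsoliton}.

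The main obstacle I anticipate is bridging the logical gap between the clean statement of case (2) as written — which only assumes $\rho=\frac{1}{2(n-1)}$ and $\operatorname{Ric}(\xi,\xi)\geq\|\varphi\|^2$, with no explicit hypothesis that any function be $\xi$-constant — and the machinery of Theorem \ref{thm4.3}, which does require such a $\xi$-constancy hypothesis. The resolution I would pursue is that at $\rho=\frac{1}{2(n-1)}$ the relevant combination $(n-1)(\lambda+\rho S)-\tfrac12 S$ reduces to $(n-1)\lambda$, so Lemma \ref{lem5} becomes $\int_M(\operatorname{Ric}(\xi,\xi)-\|\varphi\|^2 + (n-1)\xi(\lambda))=0$; if one can show the $\xi(\lambda)$ term integrates to something nonpositive (via Lemma \ref{lem4} and an integration by parts producing $\|\nabla\lambda\|^2$ terms) while $\operatorname{Ric}(\xi,\xi)-\|\varphi\|^2\geq0$, the integrand is squeezed to zero, yielding $\varphi=0$, $\operatorname{Ric}=\frac{S}{n}g$, $S$ constant, and then $\lambda$ constant. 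If instead the statement genuinely intends the $\xi$-constancy hypothesis to be inherited in case (2) as well, then the proof is immediate from Theorem \ref{thm4.3} specialized to the critical $\rho$, and I would simply note that the critical value is exactly where the auxiliary function $(n-1)(\lambda+\rho S)-\tfrac12 S$ loses its $S$-dependence, which is the geometric content worth recording.
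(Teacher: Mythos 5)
Your treatment of case (1) is fine: it is literally the forward implication of Theorem \ref{3.3}, and citing that theorem is a legitimate (and cleaner) way to dispose of it. The problem is case (2), and you have correctly diagnosed --- but not closed --- the gap. As written, case (2) supplies no $\xi$-constancy hypothesis, so neither Theorem \ref{thm4.3} nor Lemma \ref{lem2} can be triggered directly, and your proposed squeeze does not close for two concrete reasons. First, to extract $\operatorname{Ric}=\frac{S}{n}g$ from $\int_M|\operatorname{Ric}-\frac{S}{n}g|^2=\frac{n-2}{2n}\int_M g(\nabla S,\xi)$ you must show $\int_M g(\nabla S,\xi)\le 0$, and nothing in the case (2) hypotheses gives this; integration by parts only converts it to $-\int_M S\,(n(\lambda+\rho S)-S)$, which has no determined sign. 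Second, the term you hope to control via Lemma \ref{lem4} is $\int_M \xi(\lambda)=\int_M g(\nabla\lambda,\xi)$, whereas at the critical $\rho$ Lemma \ref{lem4} collapses (the two $S\Delta\lambda$ terms cancel) to $\int_M\operatorname{Ric}(\nabla\lambda,\xi)=-(n-1)\int_M\|\nabla\lambda\|^2$; without already knowing $\operatorname{Ric}=\frac{S}{n}g$ you cannot convert $\operatorname{Ric}(\nabla\lambda,\xi)$ into $g(\nabla\lambda,\xi)$, so the two identities do not chain. Your observation that $(n-1)(\lambda+\rho S)-\tfrac12 S=(n-1)\lambda$ at $\rho=\frac{1}{2(n-1)}$ is correct and is the right thing to notice, but by itself it turns Lemma \ref{lem5} into $\int_M(\operatorname{Ric}(\xi,\xi)-\|\varphi\|^2)=-(n-1)\int_M\xi(\lambda)$, which together with $\operatorname{Ric}(\xi,\xi)\ge\|\varphi\|^2$ only yields $\int_M\xi(\lambda)\le 0$ --- still not enough.

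For what it is worth, the paper's own proof does not close this gap either: it opens with the assertion that ``by hypothesis, $\xi(S)=0$,'' which follows from neither hypothesis as stated (case (1) only gives $\xi(n(\lambda+\rho S)-S)=0$, and case (2) gives no $\xi$-constancy at all), then applies Lemma \ref{lem2} to get $\operatorname{Ric}=\frac{S}{n}g$ and $S$ constant, and finally splits on whether $\lambda$ is constant --- concluding ``trivial'' in one branch and ``isometric to a Euclidean sphere'' (via the conformal vector field theorem of \cite{SDRBS}) in the other, which is strictly weaker than the triviality claimed in the statement. So your instinct that case (2) does not follow from the available lemmas is sound; the honest resolution is that it needs either an added hypothesis (e.g., the $\xi$-constancy of $(n-1)(\lambda+\rho S)-S$, reducing it to Theorem \ref{thm4.3}) or a weakened conclusion (trivial or isometric to a sphere).
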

\begin{proof}
    By hypothesis, we have  $\xi(S)=0$.
    Hence, by  Lemma \ref{lem2}, we have 
    \[
    \operatorname{Ric}=\frac{S}{n}g,
    \]
 and as $n>2$, the scalar curvature $S$ is constant.
   Now  if $\lambda$ is constant, then 
integrating \eqref{div} implies that 
$(M^n,g,\xi,\lambda,\rho)$ is a trivial Ricci soliton with $S=n(\lambda+\rho S)$.
If $\lambda$ is not a constant function, then by \eqref{rbsoliton}, we have
\[\pounds_\xi g=2\mu g,
\]
where $\mu=\lambda+\left(\rho-\frac{1}{n}\right)S$. That is, $\xi$ is a nontrivial conformal vector field with 
non-constant conformal factor $2\mu$.  Hence, by Theorem $1.5$ of \cite{SDRBS}, RB-soliton $(M^n,g,\xi,\lambda,\rho)$ is isometric to a Euclidean sphere.
\end{proof}

 \begin{remark}
  It should be noted that a compact almost RB-soliton with constant scalar curvature is gradient, cf. \cite{BBR}.
\end{remark}


\section*{Open Problems}
\begin{itemize}
    \item Non-compact analogues of \cite{BBR}'s results for almost RB-solitons.
    \item Relation to \cite{feitosa2019gradient}'s warped product constructions.
    \item Optimal $\rho$ ranges beyond $\rho<\frac{1}{2(n-1)}$.
\end{itemize}

 
 \section*{Acknowledgments} 
	The authors would like to thank
 Professor Sharief Deshmukh for some helpful
 discussions. Mohammad Aqib gratefully acknowledges doctoral research fellowship by Harish-Chandra Research Institute, Prayagraj, India, for its doctoral fellowship. Dhriti Sundar Patra would like to express his gratitude to the Indian Institute of Technology Hyderabad for providing a seed grant (Project Number SG/IITH/F295/2022-23/SG-133). 
	
	\section*{Declarations}
	
	\begin{itemize}
		\item Funding: Not applicable. 
		
		\item Conflict of interest/Competing interests: The authors have no conflict of interest and no financial interests in this article.
		
		\item Ethics approval: The submitted work is original and not submitted to more than one journal for simultaneous consideration.
		
		\item Consent to participate: Not applicable.
		
		\item Consent for publication: Not applicable.
		
		\item Availability of data and materials: This manuscript has no associated data.
		
		\item Code availability: Not applicable.
		
		\item Authors' contributions: All authors of the paper have conceptualized, methodology, investigated, validated, original draft writing, reviewed, edited, and read.
		
	\end{itemize}
 \bibliographystyle{alpha}
	\bibliography{refs}

        \noindent Mohammad Aqib\\
	Harish-Chandra Research Institute\\ 
	A CI of Homi Bhabha National Institute\\ 
	Chhatnag Road, Jhunsi, Prayagraj-211019, India.\\	
	Email: mohammadaqib@hri.res.in, aqibm449@gmail.com\\

	\noindent H. M. Shah\\
	Harish-Chandra Research Institute\\ 
	A CI of Homi Bhabha National Institute\\ 
	Chhatnag Road, Jhunsi, Prayagraj-211019, India.\\	
	E-mail:	hemangimshah@hri.res.in\\

        \noindent Dhriti Sundar Patra\\
	Department of Mathematics,\\ 
        Indian Institute of Technology - Hyderabad,\\ 
        Sangareddy-502285, India\\
        Email: dhriti@math.iith.ac.in\\

\end{document}